\newtheorem{definition}{Definition}[section]
\newtheorem{lemma}[definition]{Lemma}
\newtheorem{theorem}[definition]{Theorem}
\newtheorem{proposition}[definition]{Proposition}
\newtheorem{remarkth}[definition]{Remark}
\newtheorem{example}[definition]{Example}
\newenvironment{remark}{\begin{remarkth}\upshape}{\hfill$\diamond$\end{remarkth}}
\renewenvironment{proof}[1][\proofname]%
  {\par\normalfont\topsep6pt plus 6pt\noindent{\scshape#1. \ignorespaces}}%
  {\qed\endtrivlist}
\renewcommand{\emph}[1]{{\bfseries\itshape{#1}}}
\newcommand{\map}[3]{#1\colon#2\rightarrow#3}
\newcommand{\set}[2]{\{#1\,|\,#2\}}
\newcommand{\Real}{\mathbb{R}}
\newcommand{\pd}[2]{\frac{{\partial#1}}{{\partial#2}}}
\newcommand{\id}{\operatorname{id}}
\renewcommand{\Im}{\operatorname{Im}}
\newcommand{\calu}{\mathcal{U}}
\newcommand{\calv}{\mathcal{V}}
\newcommand{\calw}{\mathcal{W}}
\def\lbeta{\mathsf{b}}
\def\lp{\mathsf{p}}
\let\epsilon\varepsilon
\let\lvec\overleftarrow
\let\rvec\overrightarrow
\newcommand{\lcf}{\lbrack\! \lbrack}
\newcommand{\rcf}{\rbrack\! \rbrack}
\newcommand{\F}{\mathbb{F}}
\newcommand{\R}{\mathbb{R}}
\begin{document}

\title[Local discrete Mechanics]{The local description of  discrete Mechanics}

\author[J.\ C.\ Marrero]{Juan C.\ Marrero}
\address{Juan C.\ Marrero:
Unidad Asociada ULL-CSIC Geometr{\'\i}a Diferencial y Mec\'anica
Geom\'etrica, Departamento de Matem\'atica Fundamental, Facultad
de Matem\'aticas, Universidad de la Laguna, La Laguna, Tenerife,
Canary Islands, Spain} \email{jcmarrer@ull.es}

\author[D.\ Mart\'\i n de Diego]{David Mart\'\i n de Diego}
\address{David Mart\'\i n de Diego:
Instituto de Ciencias Matem\'aticas, CSIC-UAM-UC3M-UCM,
Campus de Cantoblanco, UAM,
C/Nicol\'as Cabrera, 15
 28049 Madrid, Spain} 
\email{david.martin@icmat.es}

\author[E.\ Mart\'{\i}nez]{Eduardo Mart\'{\i}nez}
\address{Eduardo Mart\'{\i}nez:
Departamento de Matem\'atica Aplicada,
Facultad de Ciencias,
Universidad de Zaragoza,
50009 Zaragoza, Spain}
\email{emf@unizar.es}

\thanks{\noindent {\it Mathematics Subject Classification} (2010): 17B66,
22A22, 70G45, 70Hxx.}

\thanks{\noindent This work has been partially supported by MEC (Spain)
Grants   MTM2009-13383, MTM2010-21186-C02-01,  MTM2011-15725E, MTM2012-33575,  MTM2012-34478,  Aragon government project DGA-E24/1, project of the Canary Government Prod ID20100210,
 the ICMAT Severo Ochoa project SEV-2011-0087 and the European project IRSES-project ``Geomech-246981''.
JCM acknowledges the partial support from IUMA (University of
Zaragoza). JCM and DMdD have benefited from attending the conference {\sl Focus Program on Geometry, Mechanics and Dynamics, the legacy of Jerry Marsden}, held at the Fields Institute, so they thank the organizers. }

\thanks{\noindent {\it Key words and phrases}: Discrete Mechanics, Geometric integration, Lie groupoids,
Lie algebroids, discrete Euler-Lagrange equations, symmetric
neighborhoods, bisections, Euler-Lagrange evolution operators}

\begin{abstract}
In this paper, we introduce local expressions for discrete Mechanics. To apply our results  simultaneously to several interesting cases, we derive these local expressions in the framework of Lie groupoids, following the program proposed by  Alan Weinstein in \cite{We0}.
To do this,   we will need  some results on the geometry of Lie groupoids,
as, for instance,  the construction of symmetric neighborhoods or the existence
of local bisections. These local descriptions  will be particular useful
for the explicit construction of geometric integrators for mechanical systems (reduced or not),
in particular, discrete Euler-Lagrange equations, discrete Euler-Poincar\'e equations,
discrete Lagrange-Poincar\'e equations... The results contained in this paper can be
considered as a local version of the study that we have started in \cite{MaMaMa},
on the geometry of discrete Mechanics on Lie groupoids.
\end{abstract}

\maketitle

\begin{center}
{\sc Dedicated to the memory of J. E. Marsden}
\end{center}
\section{Introduction}

The use of geometrical methods in the study of dynamical systems (discrete or continuous) starts by searching for geometrical structures invariant with respect to the given dynamics. It turns out the various such structures emerge naturally for classical mechanical systems, as for instance,  symplectic or Poisson  structures, together with various bundle structures. Another geometrical feature that is common to all such systems is the presence of symmetries either because there is a redundant or extra information in the description of the system or because the system possesses an intrinsic invariance. 
To actually solve them, it is necessary in most occasions to use numerical methods. Recently, a new breed of ideas in numerical analysis have come that incorporates the geometry of the systems into the analysis and that allows  accurate and robust algorithms  with lower spurious effects than the traditional ones (see \cite{HaLuWa} and references therein).
 Our  approach employs the theory of discrete Mechanics and
variational integrators~\cite{mawest} to derive  an
integrator for the dynamics preserving some of the geometry of the original system.

The study of discrete Mechanics on Lie groupoids was proposed by A.~Weinstein in~\cite{We0}. This setting is general enough to include discrete counterparts of several types of fundamental equations in Mechanics as for instance, standard Euler-Lagrange equations for Lagrangians defined on tangent bundles, Euler-Poincar\'e equations for Lagrangians defined on Lie algebras, Lagrange-Poincar\'e equations for Lagrangians defined on Atiyah bundles, etc.
Such discrete counterpart is obtained by discretizing the continuous Lagrangian to the corresponding Lie groupoid and then applying a discrete variational derivation of the discrete equations of motion. As simple examples, for a given differentiable manifold  $Q$, the discrete version of the tangent bundle $TQ$ is the product manifold $Q\times Q$, equipped with the pair Lie groupoid structure; for a given Lie group $G$,  the discrete version of its Lie algebra ${\mathfrak g}$ is the Lie group $G$.

A  Lie groupoid $G$ is a natural generalization of the concept of a Lie group, where now not all  elements are composable. The product $g_1 g_2$ of two elements is only defined on the set of composable pairs $G_2=\set{(g, h)\in G\times G}{\beta(g)=\alpha(h)}$ where
$\alpha: G\longrightarrow M$ and $\beta: G\longrightarrow M$ are
the source and target maps over a base manifold $M$. Moreover, in a Lie groupoid we have a set of identities playing a similar role that the identity element in group theory.
 The infinitesimal counterpart of the notion of a Lie groupoid is the notion of a Lie algebroid $\tau_{AG}: AG\to M$,
in the same way as the infinitesimal counterpart of the notion of a Lie group is the notion of a Lie algebra, or in other words, the discrete version of a Lie algebroid is a Lie groupoid.

In \cite{MaMaMa} we have elucidated the geometrical framework for
discrete Mechanics on Lie groupoids. In that paper, we found intrinsic expressions
for the discrete Euler-Lagrange equations, and we have introduced the
Poincar\'e-Cartan sections,  the discrete Legendre transformations
and the discrete evolution operator in both the Lagrangian and the
Hamiltonian formalism. The notion of regularity has been
completely characterized and we have proven  the symplecticity of the
discrete evolution operators. The
applicability of these developments has been stated in several
interesting examples, in particular for the case of discrete
Lagrange-Poincar\'e equations. In fact, the general theory of
discrete symmetry reduction directly follows from our results.

The main objective of this paper is to obtain local expressions for the different objects appearing in discrete Mechanics on Lie groupoids. For this proposal, it is necessary to introduce symmetric neighborhoods of a Lie groupoid. A symmetric neighborhood is an open neighborhood of one point in the manifold of the identities which  is ``natural" with respect to the structure maps of the Lie groupoid, in the sense of Proposition \ref{sym}.
Using the coordinates associated to a symmetric neighborhood we may write the local expressions of left and right invariant vector fields, $\lvec{X}$ and $\rvec{X}$ associated to a section $X\in \Gamma (\tau_{AG})$ of the associated Lie algebroid. Now, as we have deduced in \cite{LeMaMa}, the discrete Euler-Lagrange equations for a discrete Lagrangian $L_d: G\to \R$  are
\[
\lvec{X}(g_k)(L_d)-\rvec{X}(g_{k+1})(L_d)=0\; ,\quad (g_k, g_{k+1})\in G_2\; .
\]
Therefore, from this expression, we easily obtain the local expression of the discrete Euler-Lagrange equations associated to a discrete Lagrangian $L_d: G\to \R$, the discrete Legendre transformations and we locally characterize the regularity of the discrete problem.

An interesting point is that when using symmetric neighborhoods we are implicitly assuming that the discrete flow is well defined on this neighborhood. However, this is not the more general situation since, in principle, the point $g_k$ and its image $g_{k+1}$ under the discrete flow may be far enough in such a way both are not included in the same symmetric neighborhood. In order to tackle this problem we will use bisections of the Lie groupoid which permits to translate neighborhoods of two composable elements $g_k$ and $g_{k+1}$ to a symmetric neighborhood at the identity point $\beta(g_k)=\alpha(g_{k+1})$.

The organization of the paper is as follows. In section~2 we
recall some constructions and results on discrete Mechanics on Lie
groupoids which will be used in the next sections. In section~3,
we will obtain a local expression of the discrete Euler-Lagrange
equation for a discrete Lagrangian function on a symmetric
neighborhood in the Lie groupoid where it is defined. In addition
we will discuss the existence of local discrete Euler-Lagrange
evolution operators in such a symmetric neighborhood.  Moreover, several interesting examples are considered. The
existence of general local discrete Euler-Lagrange evolution
operators is studied in section~4. For this purpose, we will use
bisections on the Lie groupoid. 
%In section~5 we present some
%illustrative examples.
 The paper ends with our conclusions and a
description of future research directions.

\section{Groupoids and discrete Mechanics}

\subsection{Lie groupoids}\label{section2.2}

In this Section, we will recall the definition of a Lie groupoid
and some generalities about them are explained (for more details,
see \cite{CaWe,Mac}).

A \emph{groupoid} over a set $M$ is a set $G$ together with the
following structural maps:
\begin{itemize}
\item A pair of maps $\alpha: G \to M$, the \emph{source}, and
$\beta: G \to M$, the \emph{target}. Thus, an element $g \in G$ is
thought as an arrow from $x= \alpha(g)$ to $y = \beta(g)$ in $M$
$$
\xymatrix{*=0{\stackrel{\bullet}{\mbox{\tiny
 $x=\alpha(g)$}}}{\ar@/^1pc/@<1ex>[rrr]_g}&&&*=0{\stackrel{\bullet}{\mbox{\tiny
$y=\beta(g)$}}}}
$$
The maps $\alpha$ and $\beta$ define the set of composable pairs
$$
G_{2}=\{(g,h) \in G \times G / \beta(g)=\alpha(h)\}.
$$
\item A \emph{multiplication} $m: G_{2} \to G$, to be denoted simply by $m(g,h)=gh$, such that
\begin{itemize}
\item $\alpha(gh)=\alpha(g)$ and $\beta(gh)=\beta(h)$.
\item $g(hk)=(gh)k$.
\end{itemize}
If $g$ is an arrow from $x = \alpha(g)$ to $y = \beta(g) =
\alpha(h)$ and $h$ is an arrow from $y$ to $z = \beta(h)$ then
$gh$ is the composite arrow from $x$ to $z$
$$\xymatrix{*=0{\stackrel{\bullet}{\mbox{\tiny
 $x=\alpha(g)=\alpha(gh)$}}}{\ar@/^2pc/@<2ex>[rrrrrr]_{gh}}{\ar@/^1pc/@<2ex>[rrr]_g}&&&*=0{\stackrel{\bullet}{\mbox{\tiny
 $y=\beta(g)=\alpha(h)$}}}{\ar@/^1pc/@<2ex>[rrr]_h}&&&*=0{\stackrel{\bullet}{\mbox{\tiny
 $z=\beta(h)=\beta(gh)$}}}}$$
\item An \emph{identity map} $\epsilon: M \to G$, a section of $\alpha$ and $\beta$, such that
\begin{itemize}
\item $\epsilon(\alpha(g))g=g$ and $g\epsilon(\beta(g))=g$.
\end{itemize}
\item An \emph{inversion map} $i: G \to G$, to be denoted simply by $i(g)=g^{-1}$, such that
\begin{itemize}
\item $g^{-1}g=\epsilon(\beta(g))$ and $gg^{-1}=\epsilon(\alpha(g))$.
\end{itemize}
$$\xymatrix{*=0{\stackrel{\bullet}{\mbox{\tiny
 $x=\alpha(g)=\beta(g^{-1})$}}}{\ar@/^1pc/@<2ex>[rrr]_g}&&&*=0{\stackrel{\bullet}{\mbox{\tiny
 $y=\beta(g)=\alpha(g^{-1})$}}}{\ar@/^1pc/@<2ex>[lll]_{g^{-1}}}}$$

\end{itemize}

A groupoid $G$ over a set $M$ will be denoted simply by the symbol
$G \rightrightarrows M$.

The groupoid $G \rightrightarrows M$ is said to be a \emph{Lie
groupoid} if $G$ and $M$ are manifolds and all the structural maps
are differentiable with $\alpha$ and $\beta$ differentiable
submersions. If $G \rightrightarrows M$ is a Lie groupoid then $m$
is a submersion, $\epsilon$ is an immersion and $i$ is a
diffeomorphism. Moreover, if $x \in M$, $\alpha^{-1}(x)$ (resp.,
$\beta^{-1}(x)$) will be said the \emph{$\alpha$-fiber} (resp.,
the \emph{$\beta$-fiber}) of $x$.

On the other hand, if $g \in G$ then the \emph{left-translation by
$g \in G$} and the \emph{right-translation by $g$} are the
diffeomorphisms
$$
\begin{array}{lll}
l_{g}: \alpha^{-1}(\beta(g)) \longrightarrow
\alpha^{-1}(\alpha(g))&; \; \;& h \longrightarrow
l_{g}(h) = gh, \\
r_{g}: \beta^{-1}(\alpha(g)) \longrightarrow
\beta^{-1}(\beta(g))&; \; \;& h \longrightarrow r_{g}(h) = hg.
\end{array}
$$
Note that $l_{g}^{-1} = l_{g^{-1}}$ and $r_{g}^{-1} = r_{g^{-1}}$.

A vector field $\tilde{X}$ on $G$ is said to be
\emph{left-invariant} (resp., \emph{right-invariant}) if it is
tangent to the fibers of $\alpha$ (resp., $\beta$) and
$\tilde{X}(gh) = (T_{h}l_{g})(\tilde{X}_{h})$ (resp.,
$\tilde{X}(gh) = (T_{g}r_{h})(\tilde{X}(g)))$, for $(g,h) \in
G_{2}$.

Now, we will recall the definition of the \emph{Lie algebroid
associated with $G$}.

We consider the vector bundle $\tau: AG \to M$, whose fiber at a
point $x \in M$ is $A_{x}G = V_{\epsilon(x)}\alpha = Ker
(T_{\epsilon(x)}\alpha)$. It is easy to prove that there exists a
bijection between the space $\Gamma (\tau)$ and the set of
left-invariant (resp., right-invariant) vector fields on $G$. If
$X$ is a section of $\tau: AG \to M$, the corresponding
left-invariant (resp., right-invariant) vector field on $G$ will
be denoted $\lvec{X}$ (resp., $\rvec{X}$), where
\begin{equation}\label{linv}
\lvec{X}(g) = (T_{\epsilon(\beta(g))}l_{g})(X(\beta(g))),
\end{equation}
\begin{equation}\label{rinv}
\rvec{X}(g) = -(T_{\epsilon(\alpha(g))}r_{g})((T_{\epsilon
(\alpha(g))}i)( X(\alpha(g)))),
\end{equation}
for $g \in G$. Using the above facts, we may introduce a Lie
algebroid structure $(\lcf\cdot , \cdot\rcf, \rho)$ on $AG$, which
is defined by
\begin{equation}\label{LA}
\lvec{\lcf X, Y\rcf} = [\lvec{X}, \lvec{Y}], \makebox[.3cm]{}
\rho(X)(x) = (T_{\epsilon(x)}\beta)(X(x)),
\end{equation}
for $X, Y \in \Gamma(\tau)$ and $x \in M$. Note that
\begin{equation}\label{RL}
\rvec{\lcf X, Y\rcf} = -[\rvec{X}, \rvec{Y}], \makebox[.3cm]{}
[\rvec{X}, \lvec{Y}] = 0,
\end{equation}
\begin{equation}\label{2.6'}
Ti\circ \rvec{X}=-\lvec{X}\circ i,\;\;\;\; Ti\circ
\lvec{X}=-\rvec{X}\circ i,
\end{equation}
(for more details, see \cite{CoDaWe,Mac}).

\subsection{Discrete Euler-Lagrange equations}
Let $G$ be  a Lie groupoid with structural maps
\[
\alpha, \beta: G \to M, \; \; \epsilon: M \to G, \; \; i: G \to G,
\; \; m: G_{2} \to G.
\]
Denote by $\tau:AG\to M$ the Lie algebroid of $G$.

 A \emph{discrete Lagrangian} is a function $\map{L_d}{G}{\Real}$. Fixed $g\in
G$, we define the set of admissible sequences with values in $G$:
\[
\begin{array}{rcl}
{\mathcal C}^N_{g}=\{(g_1, \ldots, g_N)\in G^N\; / \; (g_k,
g_{k+1})\in G_2 \hbox{ for } k=1,\ldots, N-1 \\ \hbox{ and } g_1
\ldots g_n=g  \}.
\end{array}
\]
An admissible sequence $(g_{1}, \dots , g_{N}) \in {\mathcal
C}^N_g$  is a solution of \emph{the discrete Euler-Lagrange
equations} if
\[
0=\sum_{k=1}^{N-1}\left[\lvec{X}_k\big({g_k})(L_d)-\rvec{X}_k\big({g_{k+1}})(L_d)
\right], \; \; \mbox{ for } X_{1}, \dots , X_{N-1} \in
\Gamma(\tau).
\]
For $N=2$ we obtain that $(g, h)\in G_2$ is a solution if
\[
\lvec{X}({g})(L_d)-\rvec{X}({h})(L_d)=0
\]
for every section $X$ of $AG$.

\subsection{Discrete Poincar\'e-Cartan sections}

Given a Lagrangian function $\map{L_d}{G}{\Real}$, we will study
the geometrical properties of the discrete Euler-Lagrange
equations.

Consider the vector bundle
\[
\pi^{\tau}: P^{\tau}G = V\beta \oplus V\alpha \to G
\]
where $V\alpha$ (respectively, $V\beta$) is the vertical bundle of
the source map $\alpha: G \to M$ (respectively, the target map
$\beta: G \to M$). Then, one may introduce a Lie algebroid
structure on $\pi^{\tau}: P^{\tau}G = V\beta \oplus V\alpha \to
G$. The anchor map $\rho^{P^{\tau}G}: P^{\tau}G = V\beta \oplus
V\alpha \to TG$ is given by
\[
\rho^{P^{\tau}G}(X_{g}, Y_{g}) = X_{g} + Y_{g}, \; \; \mbox{ for }
(X_{g}, Y_{g}) \in V_{g}\beta \oplus V_{g}\alpha
\]
and the Lie bracket $\lcf\cdot , \cdot\rcf^{{\mathcal P}^{\tau}G}$
on the space $\Gamma(\pi^\tau)$ is characterized by the following
relation
\begin{equation}\label{e1''}
\lcf (\rvec{X}, \lvec{Y}), (\rvec{X'}, \lvec{Y'}) \rcf ^{{\mathcal
P}^{\tau}G} = (-\overrightarrow{\lcf X, X' \rcf},
\overleftarrow{\lcf Y, Y' \rcf}),
\end{equation}
for $X, Y, X', Y' \in \Gamma(\tau)$ (see \cite{MaMaMa}).

Now, define the \emph{Poincar\'e-Cartan 1-sections }
$\Theta_{L_{d}}^-, \Theta_{L_{d}}^+\in \Gamma ((\pi^\tau)^*)$ as
follows
\begin{equation}\label{5.16'}
\Theta_{L_{d}}^-(g)(X_g, Y_g)= -X_g(L_{d}),\;\;\;\;\;
\Theta_{L_{d}}^+(g)(X_g, Y_g)= Y_g(L_{d}),
\end{equation}
 for each $g\in G$ and $(X_g, Y_g)\in V_g\beta\oplus
V_g\alpha$.

If $d$ is the differential of the Lie algebroid $\pi^{\tau}:
P^{\tau}G = V\beta \oplus V\alpha \to G$ we have that
$dL_d=\Theta_{L_{d}}^+ - \Theta_{L_{d}}^- $ and so, using $d^2=0,$
it follows  that $d\Theta_{L_{d}}^+=d\Theta_{L_{d}}^-$. This means
that there exists a unique 2-section
$\Omega_{L_{d}}=-d\Theta_{L_{d}}^+=-d\Theta_{L_{d}}^-$, that will
be called the \emph{Poincar\'e-Cartan} 2-section. This 2-section
will be important for studying  symplecticity of the discrete
Euler-Lagrange equations.

Let $X$ be a section of the Lie algebroid $\tau: AG \to M$. Then,
one may consider the sections $X^{(1,0)}$ and $X^{(0,1)}$ of the
vector bundle $\pi^{\tau}: P^{\tau}G = V\beta \oplus V\alpha \to
G$ given by
\[
X^{(1,0)}(g) = (\rvec{X}(g), 0_{g}), \; \; \; X^{(0,1)}(g) =
(0_{g}, \lvec{X}(g)), \; \; \mbox{ for } g \in G.
\]
Moreover, if $g\in G$, $\{X_{\gamma}\}$ (respectively,
$\{Y_{\mu}\}$) is a local basis of $\Gamma(\tau)$ in an open
subset $U$ (respectively, $V$) of $M$ such that $\alpha(g) \in U$
(respectively, $\beta(g) \in V$) then $\{X_{\gamma}^{(1,0)},
Y_{\mu}^{(0,1)}\}$ is a local basis of $\Gamma(\pi^{\tau})$ in
$\alpha^{-1}(U) \cap \beta^{-1}(V)$ and
\begin{equation}\label{Omega1}
\Omega_{L_{d}}(X_{\gamma}^{(1,0)}, Y_{\mu}^{(1,0)}) =
\Omega_{L_{d}}(X_{\gamma}^{(0,1)}, Y_{\mu}^{(0,1)}) = 0,
\end{equation}
and
\begin{equation}\label{Omega2}
\Omega_{L_{d}}(X_{\gamma}^{(1,0)}, Y_{\mu}^{(0,1)}) =
\lvec{Y_{\mu}}(\rvec{X_{\gamma}}(L_{d})) =
\rvec{X_{\gamma}}(\lvec{Y_{\mu}}(L_{d})).
\end{equation}
(for more details, see \cite{MaMaMa}).

\subsection{Discrete Lagrangian evolution operator}

We say that a differentiable mapping $\Psi: G\longrightarrow G$ is
a \emph{discrete flow} or a \emph{discrete Lagrangian evolution
operator for $L_{d}$} if it verifies the following properties:
\begin{enumerate}
\item[-] $\hbox{graph}(\Psi)\subseteq G_2$, that is, $(g, \Psi(g))\in G_2$, $\forall g\in
G$.
\item[-] $(g, \Psi(g))$ is a solution of the discrete Euler-Lagrange
equations, for all $g\in G$, that is,
\begin{equation}\label{5.22'}
\lvec{X}(g)(L_d)-\rvec{X}(\Psi(g))(L_d)=0
\end{equation}
for every section $X$ of $AG$ and every $g\in G.$
\end{enumerate}

\subsection{Discrete Legendre transformations}
\label{section5.6} Given a discrete La\-gran\-gian
$\map{L_{d}}{G}{\Real}$ we define two \emph{discrete Legendre
transformations} $\F^{-}L_{d}: G\longrightarrow A^*G$ and
$\F^{+}L_{d}: G\longrightarrow A^*G$
 as follows (see \cite{MaMaMa})
\begin{equation}\label{DLt-}
(\F^{-}L_{d})(h)(v_{\epsilon(\alpha(h))})=-v_{\epsilon(\alpha(h))}(L_d\circ
r_h\circ i), \mbox{ for } v_{\epsilon(\alpha(h))}\in
A_{\alpha(h)}G,
\end{equation}
\begin{equation}\label{DLt+}
(\F^{+}L_d)(g)(v_{\epsilon(\beta(g))})=
v_{\epsilon(\beta(g))}(L_d\circ l_g), \mbox{ for }
v_{\epsilon(\beta(g))}\in A_{\beta(g)}G.
\end{equation}
\begin{remark}\label{r4.4'}
Note that  $(\F^{+}L_d)(g)\in A^*_{\beta(g)}G$ and $(\F^{-}L_d)(h)\in
A^*_{\alpha(h)}G$. Furthermore, if $\{X_\gamma\}$ (respectively,
$\{Y_{\mu}\}$) is a local basis of $\Gamma(\tau)$ in an open subset $U$ such that
$\alpha(h) \in U$ (respectively, $\beta(g) \in V$) and
$\{X^\gamma\}$ (respectively, $\{Y^\mu\}$) is the dual basis of
$\Gamma(\tau^*),$ it follows that
\[
\F^-L_{d}(h)=\rvec{X}_\gamma(h)(L_d)X^\gamma(\alpha(h)),\;\;\;
\F^+L_{d}(g)=\lvec{Y}_\mu(g)(L_d)Y^\mu(\beta(g)).
\]
\end{remark}

\subsection{Discrete regular Lagrangians}
A Lagrangian $L_d: G\to \Real$ on a Lie groupoid $G$ is said to be
\emph{regular} if the Poincar{\'e}-Cartan $2$-section
$\Omega_{L_{d}}$ is symplectic on the Lie algebroid $\pi^{\tau}:
P^\tau G\equiv V\beta\oplus_GV\alpha\to G$, that is,
$\Omega_{L_{d}}$ is nondegenerate (see \cite{MaMaMa}).

Using (\ref{Omega2}), we deduce that the Lagrangian $L_{d}$ is
regular if and only if for every $g\in G$ and every local basis
$\{X_\gamma\}$ (respectively, $\{Y_\mu\}$) of $\Gamma(\tau)$ on an
open subset $U$ (respectively, $V$) of $M$ such that $\alpha(g)\in
U$ (respectively, $\beta(g)\in V$) we have that the matrix
$\rvec{X_\gamma}(\lvec{Y_\mu}(L_d))$ is regular on
$\alpha^{-1}(U)\cap \beta^{-1}(V)$.

In \cite{MaMaMa}, we have proved that the following conditions are
equivalent:
\begin{itemize}
\item $L_d: G \to \Real$ is a regular discrete Lagrangian function.

\item The Legendre transformation $\F^-L_d$ is a local
diffeomorphism.

\item The Legendre transformation $\F^+L_d$ is a local
diffeomorphism.
\end{itemize}

Moreover, if $L_d:G\to \Real$ is regular and $(g_0,h_0)\in G_{2}$
is a solution of the discrete Euler-Lagrange equations for $L_d$
then there exist two open subsets $U_0$ and $V_0$ of $G$, with
$g_0\in U_0$ and $h_0\in V_0,$ and there exists a (local) discrete
Lagrangian evolution operator $\Psi_{L_{d}}:U_0\to V_0$ such that:
\begin{itemize}
\item $\Psi_{L_{d}}(g_0)=h_0,$

\item $\Psi_{L_{d}}$ is a diffeomorphism and

\item $\Psi_{L_{d}}$ is unique, that is, if $U_0'$ is an open subset of
$G$, with $g_0\in U_0'$ and $\Psi_{L_{d}}':U'_0\to G$ is a (local)
discrete Lagrangian evolution operator then $
\Psi_{L_{d}}'{|U_0\cap U_0'}=\Psi_{L_{d}}{|U_0\cap U_0'}$.
\end{itemize}

\section{Discrete Euler-Lagrange equations: symmetric neighborhoods}\label{section:symmetric}

\subsection{Symmetric neighborhoods}

First, we prove the following result
\begin{proposition}\label{sym}
Let $\calu$ be an open subset of $G$ and $x_0\in M$ be a point
such that $\epsilon(x_0)\in\calu$. There exists an open subset
$\calw\subset\calu$ of $G$ with $\epsilon(x_0)\in\calw$ and such
that
\begin{enumerate}
\item $\epsilon(\alpha(\calw))\subset\calw$ and $\epsilon(\beta(\calw))\subset\calw$,
\item $i(\calw)=\calw$, and
\item $m((\calw\times\calw)\cap G_2)\subset\calu$.
\end{enumerate}
The open subset $\calw$ is said to be a \emph{symmetric
neighborhood} associated to $\calu$ and $x_0$.
\end{proposition}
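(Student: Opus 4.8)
The plan is to reduce every condition except compatibility with the multiplication to a purely local question at the unit $e_0:=\epsilon(x_0)$, and then to build $\calw$ as a single intersection of preimages of an auxiliary neighborhood under the structure maps, exploiting that $e_0$ is a fixed point of each of $i$, $\epsilon\circ\alpha$ and $\epsilon\circ\beta$. Before anything else I would record the elementary identities used repeatedly: since $\epsilon$ is a section of both $\alpha$ and $\beta$ one has $\alpha\circ\epsilon=\beta\circ\epsilon=\id_M$; from the inversion axioms $\alpha\circ i=\beta$ and $\beta\circ i=\alpha$; inversion is an involution, $i\circ i=\id_G$; and every unit is fixed by inversion, $i(\epsilon(x))=\epsilon(x)$ (which follows from uniqueness of inverses together with $\epsilon(x)\epsilon(x)=\epsilon(x)$). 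In particular $i(e_0)=e_0$, $\epsilon(\alpha(e_0))=\epsilon(\beta(e_0))=e_0$, and $m(e_0,e_0)=e_0\in\calu$.

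Next I would dispose of condition (3), the only one that genuinely constrains the size of the neighborhood. Since $m$ is continuous and $m(e_0,e_0)=e_0\in\calu$, the set $m^{-1}(\calu)$ is open in $G_2$ and contains $(e_0,e_0)$; writing it as the trace on $G_2$ of an open subset of $G\times G$ and shrinking to a product neighborhood, I obtain an open subset $\calv\subset\calu$ of $G$ with $e_0\in\calv$ and $m((\calv\times\calv)\cap G_2)\subset\calu$. This $\calv$ is the germ that I will symmetrize.

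The key point, and the main obstacle, is that conditions (1) and (2) are self-referential: they demand that the images of $\calw$ under $\epsilon\alpha$, $\epsilon\beta$ and $i$ land back in $\calw$ itself, not merely in $\calu$. The trick is to define
\[
\calw=\calv\cap(\epsilon\circ\alpha)^{-1}(\calv)\cap(\epsilon\circ\beta)^{-1}(\calv)\cap i^{-1}(\calv),
\]
which is open (preimages of $\calv$ under continuous maps) and contains $e_0$ (each of the three maps fixes $e_0\in\calv$). I would then verify the remaining conditions directly. For (1), given $g\in\calw$ put $h:=\epsilon(\alpha(g))$; then $h$ is a unit with $\alpha(h)=\beta(h)=\alpha(g)$, so $\epsilon(\alpha(h))=\epsilon(\beta(h))=h$ and $i(h)=h$, and combined with $h\in\calv$ (because $g\in(\epsilon\alpha)^{-1}(\calv)$) this shows $h\in\calw$; the argument for $\epsilon(\beta(g))$ is symmetric. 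For (2), put $h:=i(g)$; using $\alpha(h)=\beta(g)$ and $\beta(h)=\alpha(g)$, the memberships $h\in(\epsilon\alpha)^{-1}(\calv)$ and $h\in(\epsilon\beta)^{-1}(\calv)$ follow from $g$ lying in the \emph{opposite} preimages, while $h\in\calv$ and $i(h)=g\in\calv$ follow from $g\in i^{-1}(\calv)$; hence $i(\calw)\subset\calw$, and equality follows from $i\circ i=\id$. Finally $\calw\subset\calv\subset\calu$ yields (3) by monotonicity, since $(\calw\times\calw)\cap G_2\subset(\calv\times\calv)\cap G_2$. The essential difficulty is organizing these fixed-point computations so that one intersection closes up simultaneously under all three operations; once the identities $\alpha\epsilon=\beta\epsilon=\id$, $\alpha i=\beta$, $\beta i=\alpha$ are in hand, each individual check is short.
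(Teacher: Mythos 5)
Your proof is correct and follows essentially the same route as the paper's: first shrink to a neighborhood $\calv$ of $\epsilon(x_0)$ on which the multiplication condition holds, then close it up under the structure maps by intersecting with preimages, exploiting that $\epsilon(x_0)$ is fixed by $i$, $\epsilon\circ\alpha$ and $\epsilon\circ\beta$ and that $\alpha\circ\epsilon=\beta\circ\epsilon=\id$, $\alpha\circ i=\beta$, $\beta\circ i=\alpha$. The only difference is organizational: the paper performs the closure in two stages (first $\calw_2=\calw_1\cap\alpha^{-1}(\calv')\cap\beta^{-1}(\calv')$ for condition (1), then $\calw=\calw_2\cap i(\calw_2)$ for condition (2)), whereas you take all the preimages in a single intersection and verify the three conditions simultaneously.
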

\begin{proof}
The multiplication map $\map{m}{G_2}{G}$ is continuous, so that we
may choose an open subset $\calw_1$ of $G$ such that
$\epsilon(x_0)\in\calw_1$ and $m((\calw_1\times\calw_1)\cap
G_2)\subset \calu$. Since the identity map $\map{\epsilon}{M}{G}$
is also continuous, we deduce that there exists an open subset
$\calv'$ of $M$ such that $x_0\in\calv'$ and
$\epsilon(\calv')\subset\calw_1$. Thus, if we consider the open
$\calw_2$ of $G$ given by
$\calw_2=\calw_1\cap\alpha^{-1}(\calv')\cap\beta^{-1}(\calv')$,
then it is clear that $\epsilon(x_0)\in\calw_2$ and moreover it is
easy to prove that $\epsilon(\alpha(\calw_2))\subset\calw_2$ and
$\epsilon(\beta(\calw_2))\subset\calw_2$, and also
$m((\calw_2\times\calw_2)\cap G_2)\subset\calu$. Finally, if we
take $\calw=\calw_2\cap i(\calw_2)$ it follows that $\calw$
satisfies the three above mentioned conditions.
\end{proof}

\subsection{Local coordinate expressions of structural
maps}\label{local-expressions}
 On a symmetric neighborhood of a
point it is easy to get local coordinate expressions for the
structure maps of the Lie groupoid $G$. We consider a point
$x_0\in M$ and a local coordinate system $(x,u)$, defined in a
neighborhood $\calu\subset G$ of $\epsilon(x_0)$,  adapted to the
fibration $\map{\alpha}{G}{M}$, i.e. if the coordinates of
$g\in\calu$ are $(x^i,u^\gamma)$ then the coordinates of
$\alpha(g)\in M$ are $(x^i)$. We can moreover assume that the
identities correspond to elements with coordinates $(x,0)$. The
target map $\beta$ defines a local function $\lbeta$ as follows:
if the coordinates of $g$ are $(x,u)$, then  the coordinates of
$\beta(g)$ are $\lbeta(x,u)$. Note that $\lbeta(x,0) = x$. Two
elements $g$ and $h$ with coordinates $(x,u)$ and $(y,v)$ are
composable if and only if $y=\lbeta(x,u)$. Hence local coordinates
for $G_2$ are given by $(x,u,v)$.

To obtain a local description for the product, we consider a
symmetric neighborhood $\calw$ associated to $x_0$ and $\calu$. If
two elements $g, h\in\calw$ with coordinates $(x,u)$ and $(y,v)$
respectively, are composable then $y=\lbeta(x,u)$, and the product
$gh$ has coordinates $(x,\lp(x,u,v))$ for some smooth function
$\lp$. We will write
\begin{equation}
(x,u)\cdot(y,v)=(x,\lp(x,u,v)).
\end{equation}
The relation $\beta(gh)=\beta(h)$, for $(g,h)\in G_2$, imposes the restriction $\lbeta(y,v)=\lbeta(x,\lp(x,u,v))$, i.e.
\begin{equation}
\label{beta gh=beta h}
\lbeta(\lbeta(x,u),v)=\lbeta(x,\lp(x,u,v)).
\end{equation}
The property $g\epsilon(\beta(g))=g$, for $g\in\calw$, is locally equivalent to the equation $\lp(x,u,0)=u$; while the property  $\epsilon(\alpha(g))g=g$ is locally equivalent to the equation $\lp(x,0,v)=v$. Therefore
\begin{equation}
\label{p(x,u,0)=u}
\lp(x,u,0)=u,\qquad \lp(x,0,v)=v.
\end{equation}
Associativity $(gh)k=g(hk)$ imposes the further relation
\begin{equation}
\lp(x,\lp(x,u,v),w)=\lp(x,u,\lp(y,v,w))\qquad\text{with $y=\lbeta(x,u)$.}
\end{equation}

In what follows we will use the following functions defined in terms of $\lbeta(x,u)$ and $\lp(x,u,v)$,
\begin{equation}\label{rho-L-R}
\begin{aligned}
&\rho^i_\gamma(x)=\pd{\lbeta^i}{u^\gamma}(x,0)\\
&L^\gamma_\mu(x,u)=\pd{\lp^\gamma}{v^\mu}(x,u,0)\\
&R^\gamma_\mu(x,v)=\pd{\lp^\gamma}{u^\mu}(x,0,v).
\end{aligned}
\end{equation}
We will also take into account that
\begin{equation}
\label{pd and pd2 of p}
\begin{aligned}
&\pd{\lp^\gamma}{u^\mu}(x,u,0)=\delta^\gamma_\mu
\qquad\qquad&&\pd{^2\lp^\gamma}{u^\mu\partial u^\nu}(x,u,0)=0\\
&\pd{\lp^\gamma}{v^\mu}(x,0,v)=\delta^\gamma_\mu
&&\pd{^2\lp^\gamma}{v^\mu\partial v^\nu}(x,0,v)=0,\\
\end{aligned}
\end{equation}
which follow from~\eqref{p(x,u,0)=u}. The only relevant second order derivatives are given by
\begin{equation}
\label{local.structure.functions}
C^\gamma_{\mu\nu}(x) \equiv \pd{^2\lp^\gamma}{u^\mu\partial
v^\nu}(x,0,0) -\pd{^2\lp^\gamma}{v^\mu\partial u^\nu}(x,0,0).
\end{equation}
{}From the definition of $L^\gamma_\mu$ and $R^\gamma_\mu$ it
follows that
\begin{equation}
\begin{aligned}
C^\gamma_{\mu\nu}(x)
&=\pd{L^\gamma_\nu}{u^\mu}(x,0)-\pd{L^\gamma_\mu}{u^\nu}(x,0)\\
&=\pd{R^\gamma_\mu}{v^\nu}(x,0)-\pd{R^\gamma_\nu}{v^\mu}(x,0).
\end{aligned}
\end{equation}
On the other hand, if $i: G \to G$ is the inversion we have that
\[
i(x, u) = (\lbeta(x,u), \iota(x, u))
\]
and the condition $i(\epsilon(x)) = \epsilon(x)$, for all $x\in
M$, implies that
\[
\iota(x, 0) = 0.
\]
Moreover, using that $\lp(x, u, \iota(x, u)) = 0$, we deduce that
\[
\displaystyle \frac{\partial \lp^{\gamma}}{\partial u^{\mu}}(x, u,
\iota(x, u)) + \frac{\partial \iota^{\nu}}{\partial u^{\mu}}(x, u)
\frac{\partial \lp^{\gamma}}{\partial v^{\nu}}(x, u, \iota(x,u)) =
0.
\]
Thus, from (\ref{pd and pd2 of p}), we obtain that
\begin{equation}
\displaystyle \frac{\partial \iota^{\gamma}}{\partial u^{\mu}}(x,
0) = -\delta_{\mu}^{\gamma}.
\end{equation}

\subsection{Invariant vector fields}
The local expression for left-{}  and right-trans\-la\-tions are easy
to obtain. For $g_0\in \calw\subset G$ the left translation
$l_{g_0}$ is the map
$\map{l_{g_0}}{\alpha^{-1}(\beta(g_0))}{\alpha^{-1}(\alpha(g_0))}$,
given by $l_{g_0}g=g_0g$. If $g_0$ has coordinates $(x_0,u_0)$,
then the elements on the $\alpha$-fiber  $\alpha^{-1}(\beta(g_0))$
have coordinates of the form $(\lbeta(x_0,u_0),v)$, and the
coordinates of $l_{g_0}g$ are $(x_0,\lp(x_0,u_0,v))$. We will
write
\begin{equation}
l_{(x_0,u_0)}(\lbeta(x_0,u_0),v)=(x_0,\lp(x_0,u_0,v)).
\end{equation}
Similarly, for $h_0\in \calw\subset G$ the right translation
map $\map{r_{h_0}}{\beta^{-1}(\alpha(h_0))}{\beta^{-1}(\beta(h_0))}$, is defined
by $r_{h_0}g=gh_0$. If $h_0$ has coordinates $(x_0,u_0)$,
then the elements on the $\beta$-fiber  $\beta^{-1}(\alpha(h_0))$
have coordinates of the form $(x,u)$ with the restriction
$\lbeta(x,u)=x_0$, and the coordinates of $r_{h_0}g$ are
$(x,\lp(x,u,u_0))$. We will write
\begin{equation}
r_{(x_0,u_0)}(x,u)=(x,\lp(x,u,u_0)).
\end{equation}

A left-invariant vector field is of the form
$\lvec{X}(g)=T_{\epsilon(\beta(g))}l_g(v)$ for $v\in \ker
T_{\epsilon(\beta(g))}\alpha$. To obtain a local basis of
left-invariant vector fields we can take the local coordinate
basis $e_\gamma=\pd{}{u^\gamma}|_{\epsilon(\beta(g))}$ of $\ker
T_{\epsilon(\beta(g))}\alpha$. Thus, for $g\in G$ with coordinates
$(x,u)$, we have
\begin{equation}
\label{left-alpha}
\lvec{e_{\gamma}}(g)=T_{\epsilon(\beta(g))}l_g\left(\pd{}{u^\gamma}\Big|_{\epsilon(\beta(g))}\right)
=\pd{\lp^\mu}{v^\gamma}(x,u,0)\pd{}{u^\mu}\Big|_g
=L^\mu_\gamma(x,u)\pd{}{u^\mu}\Big|_{(x,u)}.
\end{equation}

Similarly, a right-invariant vector field can be written in the form
$\rvec{X}(g) = T_{\epsilon(\alpha(g))}r_g(v)$ for $v\in \ker
T_{\epsilon(\alpha(g))}\beta$. To obtain a local basis of
right-invariant vector fields we first have to look for a basis of
the vector space $\ker T_{\epsilon(\alpha(g))}\beta$. From the
definition of the functions $\rho^i_\gamma$, it follows easily
that the vectors
$f_\gamma=\rho^i_\gamma\pd{}{x^i}-\pd{}{u^\gamma}$ are in $\ker
T_{\epsilon(\alpha(g))}\beta$, and moreover they are related to
the vectors $e_\gamma$ by the inversion map, that is
$Ti(e_\gamma)=f_\gamma$.
%
%For that we can consider the curves given in coordinates by $\sigma(t)=(\lbeta(x,tb),-tb)$ for $b\in\Real^m$ which are tangent to the $\beta$-fiber. Indeed, from~\eqref{beta gh=beta h} we have
%\[
%\beta(\sigma(t))=\lbeta(\lbeta(x,tb),-tb)=\lbeta(x,\lp(x,tb,-tb))
%\]
%and from~\eqref{pd and pd2 of p} we get
%\[
%\frac{d}{dt}\beta(\sigma(t))|_{t=0}=\frac{d}{dt}\lbeta(x,\lp(x,tb,-tb))|_{t=0}=
%b\pd{\lp}{u}(x,0,0)-b\pd{\lp}{v}(x,0,0)=0.
%\]
%The vector tangent to this curve is
%\begin{align*}
%\dot{\beta}(0)&=\left(
%\pd{\lbeta^i}{u^\alpha}(x,0)\pd{}{x^i}\Big|_{(x,0)}-\pd{}{u^\alpha}\Big|_{(x,0)}
%\right)b^\alpha\\
%&=\left(
%\rho^i_\alpha(x)\pd{}{x^i}\Big|_{(x,0)}-\pd{}{u^\alpha}\Big|_{(x,0)}
%\right)b^\alpha
%\end{align*}
It follows that a basis of right invariant vector fields is given by
\begin{equation}
\label{right-alpha}
\begin{aligned}
\rvec{e_{\gamma}}(g) &=T_{\epsilon(\alpha(g))}r_g\left(
   -\rho^i_\gamma\pd{}{x^i}\Big|_{\epsilon(\alpha(g))}
   +\pd{}{u^\gamma}\Big|_{\epsilon(\alpha(g))}\right)\\
&=-\rho^i_\gamma(x)\pd{}{x^i}\Big|_{g}
 +\left( -\rho^i_\gamma(x)\pd{\lp^\mu}{x^i}(x,0,u)
        +\pd{\lp^\mu}{u^\gamma}(x,0,u)
  \right)\pd{}{u^\mu}\Big|_{g}
  \\
&=-\rho^i_\gamma(x)\pd{}{x^i}\Big|_{g}
 +R^\mu_\gamma(x,u)\pd{}{u^\mu}\Big|_{g},
\end{aligned}
\end{equation}
where as before $(x,u)$ are the coordinates for $g\in G$. Note that, from~\eqref{p(x,u,0)=u}, we deduce that $\displaystyle\pd{\lp^\mu}{x^i}(x,0,u)=0$.

\subsection{The Lie algebroid of $G$}
The Lie algebroid of $G$ is defined on the vector bundle
$\map{\tau}{E}{M}$ with fiber at the point $x\in M$ given by
$E_x=\ker T_{\epsilon(x)}\alpha$. A local basis of sections of $E$
is given by the coordinate vector fields
$e_\gamma(x)=\pd{}{u^\gamma}|_{\epsilon(x)}$. The anchor is the
map $\map{\rho}{E}{TM}$ defined by
$\rho(a)=T_{\epsilon(x)}\beta(a)$, where $x=\tau(a)$. In local
coordinates, if $a=y^\gamma e_\gamma(x)$ then
$\rho(a)=\rho^i_\gamma(x)y^\gamma\pd{}{x^i}\Big|_x$. The bracket
is defined in terms of the bracket of left-invariant vector
fields. A simple calculation shows that
$[\lvec{e_\gamma},\lvec{e_{\mu}}]=C^\nu_{\gamma\mu}\lvec{e}_\nu$, with $C^{\nu}_{\gamma\mu}$ given by~\eqref{local.structure.functions},  from where we get $[e_\gamma,e_\mu]=C^\nu_{\gamma\mu}e_\nu$.

\subsection{Discrete Euler-Lagrange equations}
\label{Implicit-theorem}
 Consider now a discrete Lagrangian
function $L_d$. A composable pair $(g,h)\in G_2$ satisfies the
Euler-Lagrange equations for $L_d$ if
\begin{equation}
\label{Euler-Lagrange}
\lvec{X}(g)(L_d)=\rvec{X}(h)(L_d)\qquad\text{for every section $X$ of $E$.}
\end{equation}
If both $g$ and $h$ are on the same symmetric neighborhood
$\calw$, with coordinates $(x,u)$ for $g$ and $(y,v)$ for $h$, we
can apply the local results above and we readily get the
coordinate expression of the Euler-Lagrange equations
\begin{gather}
L^\mu_\gamma(x,u)\pd{L_d}{u^\mu}(x,u)+\rho^i_\gamma(y)\pd{L_d}{x^i}(y,v)-
R^\mu_\gamma(y,v)\pd{L_d}{u^\mu}(y,v)=0
\label{Euler-Lagrange1}\\
y=\lbeta(x,u),
\label{Euler-Lagrange2}
\end{gather}
where the second equation takes into account that $\beta(g)=\alpha(h)$.

Assume that we have a solution $(g_0,h_0)\in G_2$ of the
Euler-Lagrange equations. To analyze the existence of solution of
the Euler-Lagrange equations for elements $(g,h)\in G_2$ near
$(g_0,h_0)$, we can apply the implicit function theorem. In the
application of such a theorem, the relevant matrix
$[(\mathbb{F}L_d)^{\gamma}_{\mu}]_{\gamma,\mu=1,\ldots,m}$ is the
following
\begin{multline}
(\mathbb{F}L_d)^{\gamma}_{\mu}(x,u) =
\rho^i_\mu(x)\pd{^2L_d}{x^i\partial
u^\gamma}(x,u)
-\pd{R^\nu_\mu}{u^\gamma}(x,u)\pd{L_d}{u^\nu}(x,u)+\\
-R^\nu_\mu(x,u)\frac{\partial^2L_d}{\partial u^\nu
\partial u^\gamma}(x,u).
\end{multline}

\begin{proposition}
Let $(y_0,v_0)$ be the coordinates of the point $h_0$. The following statements are equivalent.
\begin{itemize}
\item The matrix $(\mathbb{F}L_d)^{\alpha}_{\beta}(y_0,v_0)$ is regular.
\item The Poincare-Cartan 2-section $\Omega_{L_d}$ is non-degenerate at the point $h_0$.
\item The map $\mathbb{F}^-{L_d}$ is a local diffeomorphism at $h_0$.
\end{itemize}
Any of them implies the following: There exist open neighborhoods
${\mathcal X}_{0} \subseteq {\mathcal W}$ and ${\mathcal Y}_{0}
\subseteq {\mathcal W}$ of $g_{0}$ and $h_{0}$ such that if $g\in
{\mathcal X}_{0}$ then there is a unique $\Psi(g) = h \in
{\mathcal Y}_{0}$ satisfying that the pair $(g, h)$ is a
solution of the Euler-Lagrange equations for $L_{d}$. In fact, the
map $\Psi: {\mathcal X}_{0} \to {\mathcal Y}_{0}$ is a local
discrete Euler-Lagrange evolution operator.
\end{proposition}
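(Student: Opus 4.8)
The plan is to show that each of the three conditions is equivalent to invertibility at $h_0$ of the single matrix $[(\mathbb{F}L_d)^\gamma_\mu]$, and then feed this into the implicit function theorem. Everything rests on one local identity: differentiating the coordinate expression~\eqref{right-alpha} of $\rvec{e_\gamma}$ and comparing termwise with the definition of $(\mathbb{F}L_d)^\gamma_\mu$ gives
\begin{equation*}
(\mathbb{F}L_d)^\gamma_\mu=-\pd{}{u^\gamma}\bigl(\rvec{e_\mu}(L_d)\bigr).
\end{equation*}
Writing $\mathsf{F}$ for the matrix $[(\mathbb{F}L_d)^\gamma_\mu]$ (with $\gamma$ as row index), this identity is the linchpin tying $\mathsf{F}$ to the Poincar\'e--Cartan section and to the Legendre transformation; the rest is tracking transpositions and one invertible change-of-basis factor.

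For the equivalence with non-degeneracy of $\Omega_{L_d}$, I would use~\eqref{Omega1} to note that in the local basis $\{e_\gamma^{(1,0)},e_\mu^{(0,1)}\}$ of $\Gamma(\pi^\tau)$ the matrix of $\Omega_{L_d}$ at $h_0$ is block-antidiagonal, $\left(\begin{smallmatrix}0&B\\-B^{\top}&0\end{smallmatrix}\right)$, with $B_{\gamma\mu}=\rvec{e_\gamma}(\lvec{e_\mu}(L_d))$ by~\eqref{Omega2}; such a form is non-degenerate exactly when $B$ is invertible. Using $[\rvec{X},\lvec{Y}]=0$ from~\eqref{RL} together with the local form~\eqref{left-alpha}, namely $\lvec{e_\mu}=L^\nu_\mu\pd{}{u^\nu}$, I can rewrite $B_{\gamma\mu}=\lvec{e_\mu}(\rvec{e_\gamma}(L_d))=L^\nu_\mu\,\pd{}{u^\nu}(\rvec{e_\gamma}(L_d))=-L^\nu_\mu\,(\mathbb{F}L_d)^\nu_\gamma$, that is $B=-\mathsf{F}^{\top}L$ in matrix notation. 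Since $\{\lvec{e_\mu}\}$ is a basis of $\ker T\alpha$, the matrix $L=[L^\nu_\mu]$ is invertible, and hence $B$ is invertible if and only if $\mathsf{F}$ is.

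For the equivalence with $\mathbb{F}^-L_d$ being a local diffeomorphism, Remark~\ref{r4.4'} yields the coordinate expression $\mathbb{F}^-L_d\colon(y,v)\mapsto\bigl(y,\ \rvec{e_\gamma}(L_d)(y,v)\bigr)$, the base point being $\alpha(h)$. Its Jacobian is block lower-triangular with diagonal blocks the identity and $\bigl[\pd{}{v^\mu}(\rvec{e_\gamma}(L_d))\bigr]=-[(\mathbb{F}L_d)^\mu_\gamma]=-\mathsf{F}^{\top}$, so $\mathbb{F}^-L_d$ is a local diffeomorphism at $h_0$ precisely when $\mathsf{F}$ is invertible there. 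This closes the chain of equivalences.

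Finally, for the existence of $\Psi$, observe that the left-hand side of~\eqref{Euler-Lagrange1} is exactly $F_\gamma(x,u,v):=\lvec{e_\gamma}(L_d)(x,u)-\rvec{e_\gamma}(L_d)(y,v)$ with $y=\lbeta(x,u)$, and $F(x_0,u_0,v_0)=0$ because $(g_0,h_0)$ solves the discrete Euler--Lagrange equations. Neither $y$ nor the first summand depends on $v$, so by the linchpin identity $\pd{F_\gamma}{v^\mu}(y_0,v_0)=-\pd{}{u^\mu}(\rvec{e_\gamma}(L_d))(y_0,v_0)=(\mathbb{F}L_d)^\mu_\gamma(y_0,v_0)$, i.e. the transpose of $\mathsf{F}$ at $h_0$, invertible by hypothesis. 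The implicit function theorem then produces open neighborhoods $\mathcal{X}_0,\mathcal{Y}_0\subseteq\calw$ and a unique smooth map $v=\psi(x,u)$ with $\psi(x_0,u_0)=v_0$ solving $F=0$; setting $\Psi(x,u)=(\lbeta(x,u),\psi(x,u))$ gives a map whose graph lies in $G_2$ (since $\alpha(\Psi(g))=\lbeta(x,u)=\beta(g)$) and whose pairs $(g,\Psi(g))$ solve the Euler--Lagrange equations, i.e. the desired local evolution operator, the uniqueness clause coming from that of the implicit function theorem. I expect the only real work to be the bookkeeping of the linchpin identity and the careful propagation of invertibility through the transposes and through the factor $L$; there is no conceptual obstacle once that identity is in hand.
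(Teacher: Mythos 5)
Your proposal is correct and follows essentially the same route as the paper's proof: the local expression of $\mathbb{F}^-L_d$ with its block-triangular Jacobian, the computation $\Omega_{\gamma\mu}=\lvec{e_\mu}(\rvec{e_\gamma}(L_d))=-L^\theta_\mu(\mathbb{F}L_d)^\theta_\gamma$ with $L$ invertible, and the implicit function theorem applied to the Euler--Lagrange equations viewed as solving for $v$. The only difference is presentational: you isolate the identity $(\mathbb{F}L_d)^\gamma_\mu=-\pd{}{u^\gamma}\bigl(\rvec{e_\mu}(L_d)\bigr)$ up front and reuse it in all three steps, whereas the paper carries out the same differentiation in place each time.
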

\begin{proof}
We first notice that the local expression of the map $\mathbb{F}^-_{L_d}$ is
\[
(\mathbb{F}^-_{L_d})(x, u) = \left(x,
-\rho^{i}_{\gamma}(x)\pd{L_d}{x^i}(x, u) + R^\mu_\gamma(x,
u)\pd{L_d}{u^\mu}(x, u)\right).
\]
The differential of this local function at the point $h\equiv(y_0,v_0)$ is of the form
\[
\begin{bmatrix}
I_n&0\\{*}&-\mathbb{F}L_d(y_0,v_0)
\end{bmatrix}
\]
from where the equivalence of the first and the third assertions immediately follows.

For the second we just take a local basis of sections
$\{e_\gamma\}$ of $E$, defined in a neighborhood of $x_0$ and
associated to the $\alpha$-vertical vector fields
$\pd{}{u^\gamma}$, and we compute the value of $\Omega_{L_d}$ on
the associated basis $\{ e_\gamma^{(1,0)},e_\gamma^{(0,1)}\}$.
From \eqref{Omega1} and \eqref{Omega2} it follows that
$\Omega_{L_d}(h_0)$ is regular if and only if the matrix
\[
\Omega_{\gamma\mu}(y_0,v_0)=\Omega_{L_d}(e_\gamma^{(1,0)},e_\mu^{(0,1)})(h_0)
=\lvec{e_\mu}(\rvec{e_\gamma}(L_d))(h_0)
\]
is regular. From the expressions~\eqref{left-alpha} and~\eqref{right-alpha} this matrix is
\begin{align*}
\Omega_{\gamma\mu}(y_0,v_0) &=-L^\theta_\mu\pd{}{u^\theta}\left(
 \rho^i_\gamma\pd{L_d}{x^i}-R^\nu_\gamma\pd{L_d}{u^\nu}
\right)(y_0,v_0)\\
&=-L^\theta_\mu\left(
 \rho^i_\gamma\pd{^2L_d}{x^i\partial u^\theta}
 -\pd{R^\sigma_\gamma}{u^\theta}\pd{L_d}{u^\sigma}
 -R^\sigma_\gamma\pd{^2L_d}{u^\sigma\partial u^\theta}
\right)(y_0,v_0)\\
&=-L^\theta_\mu(y_0,v_0)(\mathbb{F}L_d)_\gamma^\theta(y_0,v_0).
\end{align*}
It follows from~\eqref{left-alpha} that the matrix $L^\theta_\mu(y_{0},v_{0})$ is regular. Thus, we get that the regularity of the matrix
$(\mathbb{F}L_d)^{\theta}_{\gamma}(y_0,v_0)$ is equivalent to the
regularity of $\Omega_{L_d}(h_0)$.

Finally, let $\lambda_\mu$ be the left-hand side of the
discrete Euler-Lagrange equations~\eqref{Euler-Lagrange1} once the
equation~\eqref{Euler-Lagrange2} has been used
\begin{multline}
\lambda_\mu(x,u,v)=
L^\gamma_\mu(x,u)\pd{L_d}{u^\gamma}(x,u)+\rho^i_\mu(\lbeta(x,u))\pd{L_d}{x^i}(\lbeta(x,u),v)+\\
-R^\gamma_\mu(\lbeta(x,u),v)\pd{L_d}{u^\gamma}(\lbeta(x,u),v).
\end{multline}
We want to study the existence of solution of the discrete Euler-Lagrange equations $\lambda_\mu(x,u,v)=0$ in a
neighborhood of the point $(x_0,u_0,v_0)$, where $(x,u)$ are the
data and $v$ is the unknown. Applying the implicit function
theorem we have to study the regularity of the matrix
$\pd{\lambda_\mu}{v^\gamma}(x_0,u_0,v_0)$. A straightforward calculation shows that this matrix
is just
\[
\pd{\lambda_\mu}{v^\gamma}(x_0,u_0,v_0)=(\mathbb{F}L_d)^{\gamma}_{\mu}(y_0,v_0)
\qquad\text{with $y_0=\lbeta(x_0,u_0)$,}
\]
from where our last assertion readily follows.
\end{proof}

\begin{remark}{\rm
Suppose that $h_{0} \in \epsilon(M)$. Then, $h_{0}$ has local coordinates $(y_{0}, 0)$. Moreover, from (\ref{rho-L-R}) and (\ref{pd and pd2 of p}), it follows that the matrix  $L_{\mu}^{\gamma}(y_{0},0)$ is the identity matrix. Therefore, we deduce that the matrix $\Omega_{\gamma\mu}(y_{0},0)$ is, up to the sign, the matrix $(\mathbb{F}L_{d})^{\mu}_{\gamma}(y_{0},0)$ (see the proof of
Theorem \ref{DELE-operator1} in Section \ref{GdE-Leo}).}
\end{remark}

In general the Euler-Lagrange equations are understood as the
equations determining $h\equiv(y,v)$ from the already known data
$g\equiv(x,u)$, as we did in the proof of the above theorem. However, we can also try to
solve these equations backwards to obtain $(x,u)$ from $(y,v)$.
Instead of applying again the implicit function theorem to the system of
equations~\eqref{Euler-Lagrange1} and \eqref{Euler-Lagrange2} we can rewrite the equations in a different coordinate system, adapted to the fibration $\beta$, as
follows.  On the open subset $\bar{\calu}=i(\calu)$ we consider
the local coordinates $(\bar{x},\bar{u})$ defined as
$(\bar{x},\bar{u})=(x,u)\circ i$, where $i$ is the inversion map
in  the groupoid. Since $\beta\circ i=\alpha$, we have that these
new coordinates are adapted to the submersion $\beta$ and we can
proceed as in the previous case. In these coordinates, a basis of the left-invariant and right-invariant vector fields is
\begin{gather}
\label{inverse-right-alpha}
\lvec{e_{\gamma}}(\bar{x},\bar{u})
=-\rho^i_\gamma(\bar{x})\pd{}{\bar{x}^i}\Big|_{(\bar{x},\bar{u})}
 +R^\mu_\gamma(\bar{x},\bar{u})\pd{}{\bar{u}^\mu}\Big|_{(\bar{x},\bar{u})}\,\,,
\\
\label{inverse-left-alpha}
\rvec{e_{\gamma}}(\bar{x},\bar{u})
=L^\mu_\gamma(\bar{x},\bar{u})\pd{}{\bar{u}^\mu}\Big|_{(\bar{x},\bar{u})}\,.
\end{gather}

\subsection{Examples}
We present here some illustrative examples.

\subsubsection{Pair or Banal groupoid}\label{ex-banal}

We consider as a first example the pair (banal) groupoid $G=M\times M$, where the
structural maps are
\begin{eqnarray*}
&\alpha(x, y)=x, \; \; \beta(x, y)=y, \; \; \epsilon(x)=(x,x), \; \; i(x, y)=(y, x),&\\
&m((x, y), (y, z))=(x, z).&
\end{eqnarray*}
The Lie algebroid of $G$ is isomorphic to the
standard Lie algebroid $\tau_{M}: TM \to M$, therefore the pair groupoid is considered as the discrete phase space for discretization of Lagrangian functions $L: TM\to \R$.

 Let $x_0\in M$ and a local coordinate system $(x^i)$ defined on a neighborhood  ${\mathcal V}'$ of $x_0$.  Then ${\mathcal U}={\mathcal V}'\times {\mathcal V}'$   is obviously a symmetric neighborhood. For a fixed $h>0$, an associated coordinate system adapted to the $\alpha$-projection is
 \[
 x^i(x_1,x_2)=x^i(x_1) \qquad \hbox{and} \qquad u^i(x_1,x_2)= \frac{x^i(x_2)-x^i(x_1)}{h}.
 \]
Obviously the identities correspond to the elements $(x^i,0)$ in this coordinate system and
 $\alpha(x, u)=x$ and $\beta(x, u)=x+hu$.  On the other hand, the composition of two elements $(x, u)$ and $(x+hu, v)$ is
 $(x, u+v)$. Therefore $\lp(x,u,v)=u+v$.
  The inversion map is now $i(x,u)=(x+hu, -u)$.

 If we take the natural local coordinate basis  $\left\{\frac{\partial}{\partial u^i}\large|_{\epsilon(x_1)} \right\}$ of $\ker T_{ \epsilon(x_1)}\alpha$ then
\begin{align*}
 \lvec{\frac{\partial}{\partial u^i}}
   &=\frac{\partial}{\partial u^i} \\
 \rvec{\frac{\partial}{\partial u^i}}
   &=-h\frac{\partial}{\partial x^i}+\frac{\partial}{\partial u^i}.
\end{align*}
Therefore, the discrete Euler-Lagrange equations are
\begin{gather*}
 \frac{\partial L_d}{\partial u^i}(x,u)+h\frac{\partial L_d}{\partial x^i}(y, v)
   -\frac{\partial L_d}{\partial u^i}(y, v)=0\\
 y=x+hu.
\end{gather*}
The discrete Lagrangian is regular if the matrix
\[
h\frac{\partial^2 L_d}{\partial x^i\partial u^j}(x,u)-\frac{\partial^2 L_d}{\partial u^i\partial u^j}(x,u)
\]
is non singular.

 \begin{example}
 {\rm
 As a concrete simple example, consider the continuous Lagrangian  $L\colon \R^{2n}\to \R$:
\[
L({x}, \dot{x})=\frac{1}{2} \dot{x}^T M \dot{x} -V(x)
\]
(with $M$ a  constant symmetric invertible matrix). A typical discretization for the lagrangian is, for instance,
\[
L_d(x, u)=\frac{h}{2} {u}^T M {u} -hV(x+\frac{h}{2}u).
\]
Then, the discrete Euler-Lagrange equations are:
%\[
%Mu-\frac{h}{2}\frac{\partial V}{\partial x}(x+\frac{h}{2}u)+\frac{\partial V}{\partial x}(x+hu+\frac{h}{2}v)
%-Mv+\frac{h}{2}\frac{\partial V}{\partial x}(x+hu+\frac{h}{2}v)=0
%\]
\begin{gather*}
M\frac{v-u}{h}= -\frac{1}{2}\left(\frac{\partial V}{\partial x}(x+\frac{h}{2}u)+\frac{\partial V}{\partial x}(y+\frac{h}{2}v)\right)
\\
y=x+hu,
\end{gather*}
which leads us the classical  implicit midpoint rule.

If on the other hand, we take the discretization
\begin{eqnarray*}
L_d(x, u)&=&\frac{h}{2}\left(L(x, u)+L(x+hu, u)\right)\\
&=&\frac{h}{2} {u}^T M {u} -\frac{h}{2}\left(V(x)+V(x+hu)\right)\, ,
\end{eqnarray*}
then the corresponding discrete Euler-Lagrange equations are: 
\begin{gather*}
M\frac{v-u}{h}= -\frac{\partial V}{\partial x}(y)
\\
y=x+hu,
\end{gather*}
which is a representation of the St\"ormer-Verlet method.

%In the regular case, that is, when the matrix
%\[
%M-\left(\frac{h}{2}-\frac{h^2}{4}\right)\frac{\partial^2 V}{\partial x^2}(x+\frac{h}{2}u)
% \]
% is non singular.

}

\end{example}

\subsubsection{Lie groups}\label{ex-lie}

Another interesting example corresponds to the case of Lie groups. In this case, 
we consider a Lie group $G$ as a groupoid  over one point
$M=\{\frak e \}$, the identity element of $G$. The structural maps
are
\[
\alpha(g)={\frak e} , \; \; \beta(g)={\frak e} , \; \;
\epsilon({\frak e})={\frak e}, \; \; i(g)=g^{-1}, \; \; m(g,
h)=gh, \; \; \mbox{ for } g, h \in G.
\]
The Lie algebroid associated with $G$ is just the Lie algebra
${\frak g}=T_{\frak e}G$ of $G$.

Near of the identity, it is interesting to regard the elements $g\in G$ as small displacements on
the Lie group. Thus, it is possible to express each term through a Lie
algebra element that can be regarded as the averaged velocity of this
displacement. This is typically accomplished using a retraction
  map $\tau: {\mathfrak g}\to G$ which is an analytic local
diffeomorphism around the identity such that $\tau(\xi)\tau(-\xi)={\frak e}$,
where $\xi\in\mathfrak g$ (see \cite{Rabee}). Thereby $\tau$ provides a local chart on the Lie
group.

Given a  retraction map $\tau$ its right trivialized tangent $\mbox{d}\tau_{\xi}:\mathfrak{g}\rightarrow\mathfrak{g}$ and is inverse $\mbox{d}\tau_{\xi}^{-1}:\mathfrak{g}\rightarrow\mathfrak{g}$ are defined for all   $\eta\in\mathfrak{g}$ as follows
  \begin{eqnarray*}
    T_{\xi}\tau (\eta)&=&T_{\frak e} r_{\tau(\xi)}\left(\mbox{d}\tau_{\xi}(\eta)\right)\equiv \mbox{d}\tau_{\xi}(\eta)\,\tau(h\xi),\\
    T_{\tau(\xi)}\tau^{-1}((T_{\frak e} r_{\tau(\xi)})\eta)&=&\mbox{d}\tau^{-1}_{\xi}(\eta).
  \end{eqnarray*}

The retraction map allows us to transport locally the Lie group structure on an open symmetric neighborhood of $G$ to a local $\tau$-dependent Lie group structure on the Lie algebra ${\mathfrak g}$ defined on a local neighborhood $V$ of $0\in {\mathfrak g}$.
We will write $\tau(h\xi)=g$ for an enough small  time step $h>0$ such that $h\xi\in V\subseteq {\mathfrak g} $.

Now, on the Lie algebra it is easy to consider local coordinates because it is a vector space. In consequence,  fixing a basis $\{e_{\gamma}\}$ of ${\mathfrak g}$, we induce  coordinates $(u^{\gamma})$ on ${\mathfrak g}$. 

In these coordinates, a basis of left- and right-invariant vector fields is 
\begin{eqnarray*}
\lvec{e_{\gamma}}(\eta)&=&T_{\tau(h\eta)}\tau^{-1}(\tau(h\eta)e_{\gamma})=
\mbox{d}\tau^{-1}_{h\eta} ({\rm Ad}_{\tau(h\eta)}e_{\gamma})\\
\rvec{e_{\gamma}}(\eta)&=&T_{\tau(h\eta)}\tau^{-1}(e_{\gamma}\tau(h\eta))=
\mbox{d}\tau^{-1}_{h\eta}(e_{\gamma}),
\end{eqnarray*}
where $\eta\in {\mathfrak g}$.
Given a lagrangian $l: {\mathfrak g}\rightarrow \R$, we deduce that the   discrete Euler-Poincar\'e equations are: 
\[
\lvec{e_{\gamma}}(\eta_k)(l)-\rvec{e_{\gamma}}(\eta_{k+1})(l)=0,
\]
or, alternatively, as discrete Lie-Poisson equations:
\[
Ad^*_{\tau({h\eta_k})}\mu_k-\mu_{k+1}=0,
\]
where
 \[
\mu_k=(\mbox{d}\tau^{-1}_{h\eta_k}) ^*\frac{\partial l}{\partial \xi}(\eta_k)\; .
\]
\begin{example}
{\rm 
As an example of retraction map  we will consider the Cayley transform which is one of the most computationally efficient  parametrizations of a Lie group. The Cayley map ${\rm cay}:{\mathfrak g}\rightarrow  G$ is defined by ${\rm cay}(\xi)=({\frak e}-\frac{\xi}{2})^{-1}({\frak e}+\frac{\xi}{2})$ and is valid for a general class of quadratic groups as for instance $SO(n)$, $SE(n)$ or $Sp(n)$  (see \cite{HaLuWa}).  Its right trivialized derivative and inverse are given by
\begin{eqnarray*}
\mbox{d}{\rm cay}_{\xi}\,\eta&=&({\frak e}-\frac{\xi}{2})^{-1}\,\eta\,({\frak e}+\frac{\xi}{2})^{-1}\\
\mbox{d}{\rm cay}_{\xi}^{-1}\,\eta&=&({\frak e}-\frac{\xi}{2})\,\eta\,({\frak e}+\frac{\xi}{2}).
\end{eqnarray*}

In this case it is possible to write more explicitly the Lie-Poincar\'e equations. In fact, we have that  
\begin{eqnarray*}
\lvec{e_{\gamma}}(\eta)&=& ({\frak e}+\frac{h\eta}{2})\,e_{\gamma}\,({\frak e}-\frac{h\eta}{2})\\
\rvec{e_{\gamma}}(\eta)&=&({\frak e}-\frac{h\eta}{2})\,e_{\gamma}\,({\frak e}+\frac{h\eta}{2}),
\end{eqnarray*}
with $\eta \in \mathfrak{so}(3)$.
Therefore, the discrete Euler-Poincar\'e equations are: 
\begin{eqnarray*}
0&=&\langle ({\frak e}+\frac{h\eta_k}{2})\,e_{\gamma}\,({\frak e}-\frac{h\eta_k}{2}), \frac{\partial l}{\partial \xi}(\eta_k)\rangle\\
&&-
\langle ({\frak e}-\frac{h\eta_{k+1}}{2})\,e_{\gamma}\,({\frak e}+\frac{h\eta_{k+1}}{2}),\frac{\partial l}{\partial \xi}(\eta_{k+1})\rangle \\
&=&\langle e_{\gamma}-\frac{h}{2}[e_{\gamma}, \eta_k]-\frac{h^2}{4}\eta_ke_{\gamma}\eta_k, 
\frac{\partial l}{\partial \xi}(\eta_{k})\rangle\\
&&
-\langle e_{\gamma}+\frac{h}{2}[e_{\gamma}, \eta_{k+1}]-\frac{h^2}{4}\eta_{k+1}e_{\gamma}\eta_{k+1}, 
\frac{\partial l}{\partial \xi}(\eta_{k+1})\rangle.
\end{eqnarray*}

As a concrete example, we consider the case of the group $SO(3)$, using the Cayley transformation and fixing the  standard basis of its Lie algebra $\mathfrak{so}(3)$: 
\[
e_1=\left(
\begin{array}{rrr}
0&0&0\\
0&0&-1\\
0&1&0
\end{array}
\right)
,
\quad 
e_2=\left(
\begin{array}{rrr}
0&0&1\\
0&0&0\\
-1&0&0
\end{array}
\right) ,
\quad 
e_3=\left(
\begin{array}{rrr}
0&-1&0\\
1&0&0\\
0&0&0
\end{array}
\right)
 .
\] 
Thus, the elements $\eta_k$, $\eta_{k+1}$ in ${\mathfrak g}$ will have coordinates 
$(x_k, y_k, z_k)$ and $(x_{k+1}, y_{k+1}, z_{k+1})$, respectively, in this fixed basis. 
This allows us to write the previous discrete Euler-Poincar\'e equations as follows: 
\begin{eqnarray*}
0&=&\left(1+\frac{h^2x_k^2}{4}\right)\left.\frac{\partial L}{\partial x}\right|_k
+\left(\frac{h^2x_ky_k}{4}+\frac{hz_k}{2}\right)\left.\frac{\partial L}{\partial y}\right|_k\\
&&-\left(\frac{hy_k}{2}-\frac{h^2x_kz_k}{4}\right)\left.\frac{\partial L}{\partial z}\right|_k
-\left(1+\frac{h^2x_{k+1}^2}{4}\right)\left.\frac{\partial L}{\partial x}\right|_{k+1}\\
&&+\left(\frac{hz_{k+1}}{2}-\frac{h^2x_{k+1}y_{k+1}}{4}\right)\left.\frac{\partial L}{\partial y}\right|_{k+1}
-\left(\frac{hy_{k+1}}{2}+\frac{h^2x_{k+1}z_{k+1}}{4}\right)\left.\frac{\partial L}{\partial z}\right|_{k+1}\\
0&=&
\left(\frac{h^2x_ky_k}{4}-\frac{hz_k}{2}+\right)\left.\frac{\partial L}{\partial x}\right|_k
+\left(1+\frac{h^2y_k^2}{4}\right)\left.\frac{\partial L}{\partial y}\right|_k\\
&&+\left(\frac{hx_k}{2}+\frac{h^2y_kz_k}{4}\right)\left.\frac{\partial L}{\partial z}\right|_k
-\left(\frac{hz_{k+1}}{2}+\frac{h^2x_{k+1}y_{k+1}}{4}\right)\left.\frac{\partial L}{\partial x}\right|_{k+1}\\
&&-\left(1+\frac{h^2y_{k+1}^2}{4}\right)\left.\frac{\partial L}{\partial y}\right|_{k+1}
+\left(\frac{hx_{k+1}}{2}-\frac{h^2y_{k+1}z_{k+1}}{4}\right)\left.\frac{\partial L}{\partial z}\right|_{k+1}\\
0&=&
\left(\frac{hy_k}{2}+\frac{h^2x_kz_k}{4}\right)\left.\frac{\partial L}{\partial x}\right|_k
-\left(\frac{hx_k}{2}-\frac{h^2y_kz_k}{4}\right)\left.\frac{\partial L}{\partial y}\right|_k\\
&&+\left(1+\frac{h^2z_{k}}{4}\right)\left.\frac{\partial L}{\partial z}\right|_k
+\left(\frac{hy_{k+1}}{2}-\frac{h^2x_{k+1}z_{k+1}}{4}\right)\left.\frac{\partial L}{\partial x}\right|_{k+1}\\
&&-\left(\frac{hx_{k+1}}{2}+\frac{h^2y_{k+1}z_{k+1}}{4}\right)\left.\frac{\partial L}{\partial y}\right|_{k+1}
-\left(1+\frac{h^2z_{k+1}}{4}\right)\left.\frac{\partial L}{\partial z}\right|_{k+1}\; ,
\end{eqnarray*}
where $L:\R^3\rightarrow \R$ is defined by $L(x,y,z)=l(xe_1+ye_2+ze_3)$.

}

\end{example}

\subsection{Transformation or action Lie groupoid}
\label{TaLg}
The theory of Lie grou\-poids covers other interesting examples that are useful for the construction of variational integrators for different mechanical systems. This is the case of transformation or action Lie groupoids. 
Let $\widetilde{G}$ be a Lie group and $\cdot:M\times\widetilde{G}\to M$, $(x,\tilde{g})\in
M\times \widetilde{G}\mapsto x\tilde{g},$ a right action of $\widetilde{G}$  on $M$.
 Consider the  Lie groupoid $G=M\times \widetilde{G}$ over $M$ with
structural maps given by
\begin{equation}\label{*tilde}
\begin{array}{l}
{\alpha}(x,\tilde{g})=x,\;\;\; {\beta}(x,\tilde{g})=x\tilde{g},\;\;\;
{\epsilon }(x)=(x,\tilde{\frak e}),\\
{m}((x,\tilde{g}),(x\tilde{g},\tilde{g}'))=(x,\tilde{g}\tilde{g}'),\;\;\;
\text{and }\;\;\;
{i}(x,\tilde{g})=(x\tilde{g}, \tilde{g}^{-1}).
\end{array}
\end{equation}
where $\tilde{\frak e}$ is the identity on $\widetilde{G}$. The Lie groupoid $G$ is called the action  or transformation Lie groupoid. Its associated Lie algebroid is the action algebroid 
$pr_1:M\times
{\tilde{\mathfrak{ g}}}\to M$ where 
$\tilde{{\mathfrak g}}$ is the Lie algebra of Lie group $\tilde{G}$ (see \cite{MaMaMa} for  details about the Lie algebroid structure).
We have that $\Gamma(pr_1)\cong \{\bar{\eta}:M\to \tilde{\mathfrak
g}\; |\; \mbox{ where }\bar{\eta} \mbox{ is  smooth mapping}\}$. 
In particular, if $\eta\in \tilde{\mathfrak g}$ then $\eta$ defines a constant
section $C_\eta:M\to \tilde{{\mathfrak g}}$ of $pr_1:M\times \tilde{{\mathfrak
g}}\to M$: $C_{\eta}(x)=(x, \eta)$ for all $x\in M$. 
It is possible to check that the corresponding left- and right-invariant vector fields on $G$ are (see \cite{MaMaMa}):
\begin{equation}\label{*+}
\lvec{C}_\eta(x,\tilde{g})=(0_x,\lvec{\eta}(\tilde{g})),\;\;\;\;
\rvec{C}_\eta(x,\tilde{g})=(-\eta_M(x),\rvec{\eta}(\tilde{g})),
\end{equation}
for $(x,\tilde{g})\in G=M\times \widetilde{G}$ and where $\eta_M$ is the infinitesimal generator of the right action 
$\cdot:M\times \widetilde{G}\to M$ associated with $\eta$. 

%Note that if $\{\eta_i\}$ is a basis of ${\mathfrak h}$ then
%$\{C_{\eta_i}\}$ is a global basis of $\Gamma(pr_1).$

Let  $L_d: G=M\times \widetilde{G}\to \R$ be a discrete  Lagrangian. 
Then, a composable
pair $((x_k,\tilde{g}_k),(x_k\tilde{g}_k,\tilde{g}_{k+1}))\in G_2$ is a solution of the
discrete Euler-Lagrange equations for $L_d$ if
\[
\lvec{C}_\eta(x_k,\tilde{g}_k)(L)-\rvec{C}_\eta(x_k\tilde{g}_k,\tilde{g}_{k+1})(L)=0, \mbox{
for all } \eta\in {\mathfrak g}.
\]
Given a retraction map $\tau: \tilde{\mathfrak g}\rightarrow  \tilde{G}$, we can transport 
the Lie group structure on an open symmetric neighborhood of $\tilde{\mathfrak e}$ in $\tilde{G}$ to an open neighborhood of $0$ in $\tilde{\mathfrak g}$ as in Subsection
\ref{ex-lie}. Moreover, if $x_0\in M$, we may assume, without loss of generality, that this neighborhood acts on an open neighborhood $U$ of the point $x_0\in M$.  Thus if $\tilde{\xi}\in {\mathfrak g}$ the flow $\Phi_{\tilde{\xi}_M}$ of the fundamental vector field ${\tilde \xi}_M$ associated with this local action is given by 
\[
\Phi_{\tilde{\xi}_M}(t, x)=x{\mbox exp}_{\tilde{G}}(t(h\tilde{\xi}))
\]
for $t\in \R$ and $x\in U$. Therefore, as in the previous section, 
 fixed  
 a basis $\{e_{\gamma}\}$ of $\tilde{{\mathfrak g}}$, we induce a basis of left- and right-invariant vector fields 
\begin{eqnarray*}
\lvec{C_{e_{\gamma}}}(x, \tilde{\eta})&=&(0,
\mbox{d}\tau^{-1}_{h\tilde{\eta}} ({\rm Ad}_{\tau(h\tilde{\eta})}e_{\gamma}))\\
\rvec{C_{e_{\gamma}}}(x, \tilde{\eta})&=&(-h(e_{\gamma})_M(x), 
\mbox{d}\tau^{-1}_{h\tilde{\eta}}(e_{\gamma})),
\end{eqnarray*}
where $\tilde{\eta}\in \tilde{\mathfrak g}$ and $x\in U$.
Given a lagrangian $l: M\times \tilde{\mathfrak g}\rightarrow \R$, we deduce that the   discrete Euler-Lagrange equations  are: 
\begin{eqnarray*}
0&=&\mbox{d}\tau^{-1}_{h\tilde{\eta}_k} ({\rm Ad}_{\tau(h\tilde{\eta}_k)}e_{\gamma})(l_{x_k})
-\mbox{d}\tau^{-1}_{h\tilde{\eta}_{k+1}}(e_{\gamma})(l_{x_{k+1}})
+h(e_{\gamma})_M(x_{k+1})(l_{\tilde{\eta}_{k+1}}), \\
x_{k+1}&=&x_k\tau(h\tilde{\eta}_k)\; .
\end{eqnarray*}
where  for every $\tilde{\eta}\in \tilde{\mathfrak g}$ (resp., $x\in M$)  we denote by $l_{\tilde{\eta}}$
(resp., $l_x$) the real function on $M$ (resp., on $\tilde{\mathfrak g}$) given by
$l_{\tilde{\eta}}(y)=l(y,{\tilde{\eta}})$ (resp., $l_x(\tilde{\eta}')=l(x,{\tilde{\eta}}')$). 
\begin{example}
{\rm 
As a typical example of a discrete system defined on a transformation Lie groupoid
 consider a discretization of the heavy top \cite{rat,Ma,MaMaMa}.
This system is modelled on the 
transformation Lie algebroid $\tau:S^2\times\mathfrak{so}(3)\to
S^2$ with Lagrangian
\[
L_c(\Gamma,\Omega)=\frac{1}{2}\Omega\cdot
{\mathbb I}\Omega-mgd\,\Gamma\cdot\mathrm{e},
\]
where $\Omega\in\R^3\simeq\mathfrak{so}(3)$ is the angular
velocity, $\Gamma$ is the direction opposite to the gravity and
$\mathrm{e}$ is a unit vector in the direction from the fixed point to the
center of mass, all them expressed in a frame fixed to the body.
The constants $m$, $g$ and $d$ are respectively the mass of the
body, the strength of the gravitational acceleration and the
distance from the fixed point to the center of mass. The matrix
${\mathbb I}$ is the inertia tensor of the body.

We will also use the Cayley transformation on $SO(3)$ to describe the discrete Euler-Lagrange equations for the heavy top. 
We have that  
\begin{eqnarray*}
\lvec{C_{e_{\gamma}}}(\Gamma, \eta)&=&\left(0,  ({{\frak e}}+\frac{h{\eta}}{2})\,e_{\gamma}\,({\frak e}-\frac{h\eta}{2})\right)\\
\rvec{C_{e_{\gamma}}}(\Gamma, {\eta})&=&\left(h\Gamma e_{\gamma} ,({\frak e}-\frac{h\eta}{2})\,e_{\gamma}\,({\frak e}+\frac{h\eta}{2})\right),
\end{eqnarray*}
with 
\[\eta=
\left(\begin{array}{rrr}
0&-\Omega_3&\Omega_2\\
\Omega_3&0&-\Omega_1\\
-\Omega_2&\Omega_1&0
\end{array}
\right)\; ,
\]
$\{e_{\gamma}\}$ the standard basis on $SO(3)$ and $\Gamma\in S^2$.

Therefore, the discrete Euler-Lagrange equations are:

\begin{eqnarray*}
0&=&\langle ({\frak e}+\frac{h\eta_k}{2})\,e_{\gamma}\,({\frak e}-\frac{h\eta_k}{2}), \frac{\partial (L_c)_{\Gamma_k}}{\partial \xi}(\eta_k)\rangle\\
&&-
\langle ({\frak e}-\frac{h\eta_{k+1}}{2})\,e_{\gamma}\,({\frak e}+\frac{h\eta_{k+1}}{2}),\frac{\partial (L_c)_{\Gamma_{k+1}}}{\partial \xi}(\eta_{k+1})\rangle\\
&&+h\Gamma_{k+1} e_{\gamma}\cdot\frac{\partial (L_c)_{\eta_{k+1}}}{\partial \Gamma}(\Gamma_{k+1})\\
\Gamma_{k+1}&=&\Gamma_k\;  \mbox{cay }(h\eta_k),
\end{eqnarray*}
or, in other terms
\begin{eqnarray*}
0&=&I_1(\Omega_1)_k\left(1+\frac{h^2(\Omega_1)_k^2}{4}\right)
+I_2(\Omega_2)_{k}\left(\frac{h^2(\Omega_1)_k(\Omega_2)_{k}}{4}+\frac{h(\Omega_3)_{k}}{2}\right)\\
&&-I_3(\Omega_3)_{k}\left(\frac{h(\Omega_2)_{k}}{2}-\frac{h^2(\Omega_1)_k(\Omega_3)_{k}}{4}\right)
-I_1(\Omega_1)_{k+1}\left(1+\frac{h^2(\Omega_1)_{k+1}^2}{4}\right)\\
&&+I_2(\Omega_2)_{k+1}\left(\frac{h(\Omega_3)_{k+1}}{2}-\frac{h^2(\Omega_1)_{k+1}(\Omega_2)_{k+1}}{4}\right)\\
&&
-I_3(\Omega_3)_{k+1}\left(\frac{h(\Omega_2)_{k+1}}{2}+\frac{h^2(\Omega_1)_{k+1}(\Omega_3)_{k+1}}{4}\right)\\
&&+hmgd( Z_{k+1}{\rm e}_2-Y_{k+1}{\rm e}_3)
\end{eqnarray*}
\begin{eqnarray*}
0&=&
I_1(\Omega_1)_k\left(\frac{h^2(\Omega_1)_k(\Omega_2)_{k}}{4}-\frac{h(\Omega_3)_{k}}{2}+\right)
+I_2(\Omega_2)_{k}\left(1+\frac{h^2(\Omega_2)_{k}^2}{4}\right)\\
&&+I_3(\Omega_3)_{k}\left(\frac{h(\Omega_1)_k}{2}+\frac{h^2(\Omega_2)_{k}(\Omega_3)_{k}}{4}\right)
-I_1(\Omega_1)_{k+1}\left(\frac{h(\Omega_3)_{k+1}}{2}+\frac{h^2(\Omega_1)_{k+1}(\Omega_2)_{k+1}}{4}\right)\\
&&-I_2 (\Omega_2)_{k+1}\left(1+\frac{h^2(\Omega_2)_{k+1}^2}{4}\right)
+I_3(\Omega_3)_{k+1}\left(\frac{h(\Omega_1)_{k+1}}{2}-\frac{h^2(\Omega_2)_{k+1}(\Omega_3)_{k+1}}{4}\right)\\
&&+hmgd( X_{k+1}{\rm e}_3-Z_{k+1}{\rm e}_1)
\end{eqnarray*}
\begin{eqnarray*}
0&=&
I_1(\Omega_1)_k\left(\frac{h(\Omega_2)_{k}}{2}+\frac{h^2(\Omega_1)_k(\Omega_3)_{k}}{4}\right)
-I_2(\Omega_2)_{k}\left(\frac{h(\Omega_1)_k}{2}-\frac{h^2(\Omega_2)_{k}(\Omega_3)_{k}}{4}\right)\\
&&+I_3(\Omega_3)_{k}\left(1+\frac{h^2(\Omega_3)_{k}}{4}\right)
+I_1(\Omega_1)_{k+1}\left(\frac{h(\Omega_2)_{k+1}}{2}-\frac{h^2(\Omega_1)_{k+1}(\Omega_3)_{k+1}}{4}\right)\\
&&-I_2(\Omega_2)_{k+1}\left(\frac{h(\Omega_1)_{k+1}}{2}+\frac{h^2(\Omega_2)_{k+1}(\Omega_3)_{k+1}}{4}\right)
-I_3(\Omega_3)_{k+1}\left(1+\frac{h^2(\Omega_3)_{k+1}}{4}\right)\\
&&-hmgd( X_{k+1}{\rm e}_2-Y_{k+1}{\rm e}_1)
\\
0&=&(X_{k+1}, Y_{k+1}, Z_{k+1})-(X_k, Y_k, Z_k)({\frak e}-\frac{h\eta_k}{2})^{-1}({\frak e}+\frac{h\eta_k}{2})
\end{eqnarray*}
where 
$\Gamma_k=(X_k, Y_k, Z_k)\in \R^3$ with $X_k^2+Y_k^2+Z_k^2=1$, ${\rm e}=({\rm e}_1, {\rm e}_2, {\rm e}_3)$ and
\[
\eta_k=
\left(\begin{array}{rrr}
0&-(\Omega_3)_k&(\Omega_2)_k\\
(\Omega_3)_k&0&-(\Omega_1)_k\\
-(\Omega_2)_k&(\Omega_1)_k&0
\end{array}
\right)\; ,
\]
}
\end{example}

\begin{remark}
{\rm 
Our approach also admits other interesting examples. For instance, assume that we have a  
discrete system modeled by a  $L_d: M\times M\times G\to \R$ which is  an approximation of a continuous Lagrangian $L: TM\times {\mathfrak g}\to \R$. This lagrangian typically appears as reduction of a $G$-invariant Lagrangian function $\tilde{L}: T(M\times G)\to \R$ (see \cite{MaMaMa}, for the general case). Of course we can combine the techniques exposed in Subsections \ref{ex-banal} and \ref{ex-lie} to obtain a local description of the corresponding discrete Euler-Lagrange equations in terms of the continuous Lagrangian $L$.
}
\end{remark}

\section{Bisections and discrete Euler-Lagrange evolution operators}
\label{sec:EL-general}

One of the main limitations of the techniques exposed in Section \ref{section:symmetric} is that we  need to work in a neighborhood of the identities of the Lie groupoid $G$. 
Using an enough small time stepping we can guarantee that the evolution of the evolution operator for a discrete Lagrangian takes values on the chosen symmetric neighborhood, even it is possible to adapt the time stepping to make it happen.  Another possibility is to use the   notion of bisections on Lie groupoids. As we will see it will allow us to consider points far from the identities completing our local description of discrete Mechanics.

We consider now the general case of a
solution $(g_0,h_0)\in G_2$ of the Euler-Lagrange equations where
the points $g_0$ and $h_0\in G$ are not necessarily close enough to be contained in a common  symmetric neighborhood. If we want to obtain a local expression of the
discrete Euler-Lagrange operator which connects the above points,
we must choose suitable neighborhoods of $g_0$ and $h_0$. For this
purpose, we will consider a symmetric neighborhood $\calw$, a local bisection through the point $g_0$ and a local bisection through $h_0$. By left-translation and right-translation (induced by these sections) of $\calw$ we will get such neighborhoods.

%For this
%purpose, we will consider a symmetric neighborhood $\calw$ and the
%left-translation (respectively, right-translation) induced by
%local bisections through the point $g_0$ (respectively $h_0$).

\subsection{Bisections of a Lie groupoid}
The results contained in this section are well-known in the
literature (see, for instance, \cite{CaWe,Mac}). However, to make
the paper more self-contained, we will include the proofs of them.

Let $G \rightrightarrows M$ be a Lie groupoid with source
$\map{\alpha}{G}{M}$  and target $\map{\beta}{G}{M}$.

\begin{definition}
A \emph{bisection} of $G$ is a closed embedded submanifold
$\Sigma$ of $G$ such that the restrictions of both $\alpha$ and
$\beta$ to $\Sigma$ are diffeomorphisms.
\end{definition}

A bisection defines both a section $\Sigma_\alpha$ of $\alpha$ and
a section $\Sigma_\beta$ of $\beta$ as follows.

\begin{proposition}
Given a bisection $\Sigma$ the map
$\Sigma_\alpha=(\alpha|_\Sigma)^{-1}$ is a smooth section of
$\alpha$ such that $\Im(\Sigma_\alpha)=\Sigma$ and
$\beta\circ\Sigma_\alpha$ is a diffeomorphism. Alternatively, the
map $\Sigma_\beta=(\beta|_\Sigma)^{-1}$ is a smooth section of
$\beta$ such that $\Im(\Sigma_\beta)=\Sigma$ and
$\alpha\circ\Sigma_\beta$ is a diffeomorphism.
\end{proposition}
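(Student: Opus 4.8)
The plan is to unpack the definition of a bisection and verify each claimed property by direct application of the inverse-function/diffeomorphism machinery built into the hypotheses. Recall that $\Sigma$ is a closed embedded submanifold on which both $\alpha|_\Sigma$ and $\beta|_\Sigma$ are diffeomorphisms onto $M$. Since $\alpha|_\Sigma\colon\Sigma\to M$ is a diffeomorphism, its inverse $\Sigma_\alpha=(\alpha|_\Sigma)^{-1}\colon M\to\Sigma\subset G$ is a smooth map, and I would first record that it is genuinely a section of $\alpha$: for $x\in M$ we have $\alpha(\Sigma_\alpha(x))=(\alpha|_\Sigma)\big((\alpha|_\Sigma)^{-1}(x)\big)=x$, so $\alpha\circ\Sigma_\alpha=\id_M$. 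The statement $\Im(\Sigma_\alpha)=\Sigma$ is immediate, because the inverse of a bijection onto $M$ has image exactly the domain $\Sigma$.

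\textbf{Next} I would treat the diffeomorphism claim for $\beta\circ\Sigma_\alpha$. Here the key observation is to factor this composite through $\Sigma$: writing $\beta\circ\Sigma_\alpha=(\beta|_\Sigma)\circ\Sigma_\alpha$, I have a composition $M\xrightarrow{\Sigma_\alpha}\Sigma\xrightarrow{\beta|_\Sigma}M$ of two diffeomorphisms, namely $\Sigma_\alpha=(\alpha|_\Sigma)^{-1}$ and the hypothesised diffeomorphism $\beta|_\Sigma$. A composition of diffeomorphisms is a diffeomorphism, so $\beta\circ\Sigma_\alpha$ is a diffeomorphism of $M$, with inverse $(\alpha|_\Sigma)\circ(\beta|_\Sigma)^{-1}$. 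The symmetric statements for $\Sigma_\beta=(\beta|_\Sigma)^{-1}$ follow by interchanging the roles of $\alpha$ and $\beta$ throughout: $\Sigma_\beta$ is a smooth section of $\beta$ with image $\Sigma$, and $\alpha\circ\Sigma_\beta=(\alpha|_\Sigma)\circ\Sigma_\beta$ is again a composite of two diffeomorphisms, hence a diffeomorphism.

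\textbf{The only subtlety} worth flagging is the distinction between $\Sigma_\alpha$ as a map into $G$ versus as a map into the submanifold $\Sigma$. The smoothness of $\Sigma_\alpha\colon M\to G$ requires that $(\alpha|_\Sigma)^{-1}\colon M\to\Sigma$ be smooth as a map into $G$; this is exactly where the hypothesis that $\Sigma$ is an \emph{embedded} submanifold is used, since for an embedded submanifold the inclusion $\Sigma\hookrightarrow G$ is a smooth embedding and a smooth map into $\Sigma$ composes with the inclusion to a smooth map into $G$. I do not expect any genuine obstacle here—the proof is essentially a bookkeeping exercise in composing diffeomorphisms—but I would make the embedding hypothesis explicit at the point where smoothness into $G$ is asserted, so that the factorisations used in the diffeomorphism arguments are fully justified.
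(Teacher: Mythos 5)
Your proof is correct and follows essentially the same route as the paper's: identify $\Sigma_\alpha$ as the inverse of the diffeomorphism $\alpha|_\Sigma$, check $\alpha\circ\Sigma_\alpha=\id_M$ and $\Im(\Sigma_\alpha)=\Sigma$ directly, and obtain $\beta\circ\Sigma_\alpha=(\beta|_\Sigma)\circ(\alpha|_\Sigma)^{-1}$ as a composition of diffeomorphisms. Your additional remark on where the embedded-submanifold hypothesis enters (smoothness of $\Sigma_\alpha$ as a map into $G$) is a worthwhile clarification that the paper leaves implicit.
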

\begin{proof}
Indeed, since $\map{\alpha|_\Sigma}{\Sigma}{M}$ is a
diffeomorphism, the map $\Sigma_\alpha$ is well defined and
satisfies $\alpha\circ\Sigma_\alpha=\id_M$. Moreover
$\Im\Sigma_\alpha=\Im((\alpha_\Sigma)^{-1})=\Sigma$ and
$\beta\circ\Sigma_\alpha=(\beta|_\Sigma)\circ(\alpha|_\Sigma)^{-1}$
is a diffeomorphism because it is a composition of
diffeomorphisms. The proof of the second statement is similar.
\end{proof}

Notice that the diffeomorphisms $\beta\circ\Sigma_\alpha$ and
$\alpha\circ\Sigma_\beta$ are each one the inverse of the
other
\[
(\beta\circ\Sigma_\alpha)^{-1}=\alpha\circ\Sigma_\beta
\qquad\text{and}\qquad
(\alpha\circ\Sigma_\beta)^{-1}=\beta\circ\Sigma_\alpha.
\]

\begin{definition}
A \emph{local bisection} of $G$ is a closed embedded submanifold
$\calw$ of $G$ such that there exist open subsets
$\calu,\calv\subset M$ for which both
$\map{\alpha|_\calw}{\calw}{\calu}$ and
$\map{\beta|_\calw}{\calw}{\calv}$ are diffeomorphisms.
\end{definition}

\begin{proposition}
Given a local bisection $\calw$ the map
$\calw_\alpha=(\alpha|_\calw)^{-1}$ is a smooth local section of
$\alpha$ defined on the open set $\calu$ such that
$(\calw_\alpha)(\calu)=\calw$ and
$\map{\beta\circ\calw_\alpha}{\calu}{\calv}$ is a diffeomorphism.
Alternatively, the map $\calw_\beta=(\beta|_\calw)^{-1}$ is a
smooth local section of $\beta$ defined on the open set $\calv$
such that $(\calw_\beta)(\calv)=\calw$ and
$\map{\alpha\circ\calw_\beta}{\calv}{\calu}$ is a diffeomorphism.
\end{proposition}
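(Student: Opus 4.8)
The plan is to mirror, essentially verbatim, the argument used for the preceding proposition on (global) bisections, with the base manifold $M$ replaced throughout by the open sets $\calu$ and $\calv$ that accompany the definition of a local bisection. The only structural difference between a bisection and a local bisection is that $\alpha|_\calw$ and $\beta|_\calw$ are now diffeomorphisms onto the open subsets $\calu,\calv\subset M$ rather than onto all of $M$; every step of the reasoning localizes without trouble.

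First I would observe that, since $\map{\alpha|_\calw}{\calw}{\calu}$ is a diffeomorphism, its inverse $\calw_\alpha=(\alpha|_\calw)^{-1}$ is a well-defined smooth map from $\calu$ into $\calw\subseteq G$. It is a local section of $\alpha$ because $\alpha\circ\calw_\alpha=(\alpha|_\calw)\circ(\alpha|_\calw)^{-1}=\id_\calu$, and its image is $\Im\calw_\alpha=\calw_\alpha(\calu)=\calw$ directly from the definition of the inverse. Finally, the composite $\beta\circ\calw_\alpha=(\beta|_\calw)\circ(\alpha|_\calw)^{-1}\colon\calu\to\calv$ is a diffeomorphism, being a composition of the two diffeomorphisms supplied by the hypothesis that $\calw$ is a local bisection. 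This settles the first statement.

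The second statement follows by the symmetric argument, interchanging the roles of $\alpha$ and $\beta$ and of $\calu$ and $\calv$: starting from the fact that $\map{\beta|_\calw}{\calw}{\calv}$ is a diffeomorphism, one sets $\calw_\beta=(\beta|_\calw)^{-1}$ and verifies in the same manner that it is a smooth local section of $\beta$ defined on $\calv$ with $(\calw_\beta)(\calv)=\calw$, and that $\alpha\circ\calw_\beta=(\alpha|_\calw)\circ(\beta|_\calw)^{-1}$ is a diffeomorphism onto $\calu$.

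I do not expect any genuine obstacle: this is merely the open-subset analogue of an already-proven fact, and every diffeomorphism invoked is part of the hypotheses. The only point that warrants a moment's care is the bookkeeping of domains and codomains --- namely, that $\calw_\alpha$ must be regarded as a map \emph{onto} the open set $\calu$, so that ``local section of $\alpha$'' means $\alpha\circ\calw_\alpha=\id_\calu$ on $\calu$ rather than on all of $M$, and dually for $\calw_\beta$ on $\calv$.
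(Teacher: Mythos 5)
Your proposal is correct and is exactly the route the paper takes: the paper's proof of this proposition simply states that it is ``a straightforward modification of the proof for global bisections,'' and your argument spells out precisely that modification, replacing $M$ by the open sets $\calu$ and $\calv$ and composing the two diffeomorphisms $(\beta|_\calw)\circ(\alpha|_\calw)^{-1}$. Nothing is missing.
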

\begin{proof}
The proof is a straightforward modification of the proof for global bisections.
\end{proof}

We will need the following straightforward result.

\begin{lemma}
Let $A$ and $B$ be linear subspaces of a finite dimensional vector
space $V$, with $\dim(A)=\dim(B)$. There exists a linear subspace
$C\subset V$ such that $A\oplus C=V$ and $B\oplus C=V$.
\end{lemma}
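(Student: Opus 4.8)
The plan is to prove the statement by induction on the codimension $n-k$, where $n=\dim(V)$ and $k=\dim(A)=\dim(B)$, constructing at each stage a common complement $C$ of dimension $n-k$. In the base case $n-k=0$ one has $A=B=V$, and $C=\{0\}$ works.

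For the inductive step I would assume $n-k>0$, so that $A$ and $B$ are both proper subspaces. The crucial preliminary fact is that a vector space is never the union of two proper subspaces: choosing $a\in A\setminus B$ and $b\in B\setminus A$, the vector $a+b$ lies in neither $A$ nor $B$, so $A\cup B$ is a proper subset of $V$. Hence I may pick $v\in V\setminus(A\cup B)$ and form $\tilde A=A\oplus\langle v\rangle$ and $\tilde B=B\oplus\langle v\rangle$, both of dimension $k+1$ and therefore of codimension $n-k-1$. By the induction hypothesis applied to $\tilde A$ and $\tilde B$ there exists a common complement $C'$, that is $\tilde A\oplus C'=V=\tilde B\oplus C'$.

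I would then set $C=\langle v\rangle\oplus C'$; the sum is direct since $v\in\tilde A$ while $\tilde A\cap C'=\{0\}$, so $\dim(C)=n-k$. To finish I verify that $C$ is a common complement: since $A+C=A+\langle v\rangle+C'=\tilde A+C'=V$ and $\dim(A)+\dim(C)=k+(n-k)=n=\dim(V)$, the intersection $A\cap C$ must be trivial, giving $A\oplus C=V$; the argument for $B$ is identical. The only genuine obstacle is the preliminary covering fact, which secures the existence of the enlarging vector $v$ at each step; the rest is a routine dimension count, and the induction terminates because the codimension strictly decreases.
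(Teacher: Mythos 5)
Your proof is correct, but it takes a genuinely different route from the paper's. The paper gives a one-shot explicit construction: it takes a basis $\{c_i\}$ of $A\cap B$, extends it to bases $\{c_i,a_\alpha\}$ of $A$ and $\{c_i,b_\alpha\}$ of $B$, completes $\{c_i,a_\alpha,b_\alpha\}$ to a basis $\{c_i,a_\alpha,b_\alpha,d_J\}$ of $V$, and sets $C=\operatorname{span}\{a_\alpha+b_\alpha,d_J\}$ --- the ``diagonal'' trick applied all at once. You instead induct on the codimension, growing $A$ and $B$ simultaneously by a vector $v\notin A\cup B$ whose existence rests on the fact that a vector space is never the union of two proper subspaces, and then peel off $\langle v\rangle$ from the common complement of the enlarged spaces. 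The paper's argument is shorter and produces $C$ explicitly from bases in a single step; yours isolates the covering fact as the one genuine obstruction and reduces everything else to dimension counting, which some may find more transparent about \emph{why} a common complement exists. One small point to tidy: your choice of $a\in A\setminus B$ and $b\in B\setminus A$ presupposes $A\neq B$; when $A=B$ (the only way one can contain the other, given $\dim A=\dim B$) the union $A\cup B=A$ is already a proper subspace and $v$ exists trivially, so the argument goes through, but you should say so. With that caveat addressed, the induction is sound and terminates as you claim.
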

\begin{proof}
Let $\{c_i\}$ be a basis of $A\cap B$. We complete to a basis
$\{c_i,a_\alpha\}$ of $A$, and we also complete to a basis
$\{c_i,b_\alpha\}$ of $B$. Then $\{c_i,a_\alpha,b_\alpha\}$ is a
basis of $A+B$, which can be completed to a basis
$\{c_i,a_\alpha,b_\alpha,d_J\}$ of $V$. Then, the subspace
$C=\operatorname{span}\{a_\alpha+b_\alpha,d_J\}$ is such that
$A\oplus C=V$ and $B\oplus C=V$.
\end{proof}

\begin{proposition}[Existence of local bisections]
Given $g\in G$ there exists a local bisection $\calw$ such that
$g\in\calw$.
\end{proposition}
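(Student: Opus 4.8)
The plan is to construct the bisection locally as a submanifold of $G$ passing through $g$ that is transverse, in a compatible way, to both the $\alpha$-fibers and the $\beta$-fibers at $g$. The natural requirement is to find a subspace $W\subset T_gG$ complementary to $\ker T_g\alpha$ and simultaneously complementary to $\ker T_g\beta$; the preceding Lemma is designed exactly for this, applied with $A=\ker T_g\alpha$, $B=\ker T_g\beta$, and $V=T_gG$. Since $\alpha$ and $\beta$ are submersions, these two kernels have the same dimension $\dim G-\dim M$, so the Lemma yields a subspace $C\subset T_gG$ with $\ker T_g\alpha\oplus C=T_gG$ and $\ker T_g\beta\oplus C=T_gG$.

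First I would choose a submanifold $\calw$ of $G$ through $g$ whose tangent space at $g$ is this subspace $C$; concretely, take a chart around $g$ and let $\calw$ be the image of a small coordinate disk tangent to $C$. Because $T_g\calw=C$ is complementary to $\ker T_g\alpha$, the restriction $T_g(\alpha|_\calw)\colon T_g\calw\to T_{\alpha(g)}M$ is a linear isomorphism, so by the inverse function theorem $\alpha|_\calw$ is a local diffeomorphism near $g$; the same argument with $\ker T_g\beta$ and $C$ shows $\beta|_\calw$ is a local diffeomorphism near $g$. After shrinking $\calw$ to a smaller neighborhood of $g$ I would arrange that both restrictions are genuine diffeomorphisms onto open subsets $\calu=\alpha(\calw)$ and $\calv=\beta(\calw)$ of $M$, and that $\calw$ is a closed embedded submanifold of the ambient open set on which the chart is defined. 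This yields precisely the two diffeomorphisms $\map{\alpha|_\calw}{\calw}{\calu}$ and $\map{\beta|_\calw}{\calw}{\calv}$ required in the definition of a local bisection, and $g\in\calw$ by construction.

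The only subtle point is the matching of the abstract algebraic statement (the existence of the complement $C$) with the differential-geometric construction of an honest submanifold realizing $C$ as its tangent space while keeping both source and target restrictions diffeomorphisms after shrinking. This is where I expect the main bookkeeping to lie: one must be careful that shrinking $\calw$ to secure the diffeomorphism property for $\alpha|_\calw$ does not destroy it for $\beta|_\calw$, but since both are open conditions (being local diffeomorphisms at $g$, hence on a neighborhood), a single sufficiently small neighborhood works for both simultaneously. No genuine obstruction arises; the substance of the proof is entirely contained in the Lemma, and the rest is the standard translation of a transversality condition into a local diffeomorphism via the inverse function theorem.
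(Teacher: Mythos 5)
Your proof is correct and follows essentially the same route as the paper: apply the preceding lemma to $A=\ker T_g\alpha$, $B=\ker T_g\beta$, $V=T_gG$ to get a common complement, take a submanifold through $g$ tangent to it, and apply the inverse function theorem to both restrictions, shrinking to a single neighborhood on which both are diffeomorphisms. The paper does exactly this (it calls the complement $I$ and intersects the two neighborhoods $\calw^\alpha\cap\calw^\beta$), so there is nothing to add.
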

\begin{proof}
Since the dimensions of $\ker(T_g\alpha)$ and $\ker(T_g\beta)$ are
equal, there exists a subspace $I\subset T_gG$ such that
$T_gG=\ker(T_g\alpha)\oplus I$ and $T_gG=\ker(T_g\beta)\oplus I$.
Notice that, since $\alpha$ and $\beta$ are submersions, we have
that $T_g\alpha (I)=T_{\alpha(g)}M$ and
$T_g\beta(I)=T_{\beta(g)}M$.

Let $\Sigma\subset G$ be any submanifold such that $g\in\Sigma$
and $T_g\Sigma=I$. The maps $T_g\alpha|_{\Sigma}$ and
$T_g\beta|_{\Sigma}$ are linear isomorphisms at the point $g$.
Indeed $\Im(T_g\alpha|_\Sigma)=T_g\alpha(I)=T_{\alpha(g)}M$, and
similarly $\Im(T_g\beta|_\Sigma)=T_g\beta(I)=T_{\beta(g)}M$, and
$\dim(M)=\dim(I)$. By the inverse function theorem, it follows
that there exist open subsets $\calw^\alpha$ and $\calw^\beta$ in
$\Sigma$ and $\calu^\alpha$ and $\calv^\beta$ in $M$ such that
$g\in\calw^\alpha\cap\calw_\beta$ and
$\map{\alpha|_{\calw^\alpha}}{\calw^\alpha}{\calu^\alpha}$ and
$\map{\beta|_{\calw^\beta}}{\calw^\beta}{\calv^\beta}$ are
diffeomorphisms. By taking $\calw=\calw^\alpha\cap\calw^\beta$,
$\calu=\alpha(\calw)$ and $\calv=\beta(\calw)$ we have that $\calw$
is a local bisection and $g\in\calw$.
\end{proof}

In what follows we do not distinguish, in the notation, global and
local bisections; all then will be denoted by $\Sigma$.

\begin{definition}
Given a local bisection $\Sigma$ defined on the open sets $\calu$
and $\calv$, the local \emph{left translation} by
$\Sigma$ is the map
$\map{L_\Sigma}{\alpha^{-1}(\calv)}{\alpha^{-1}(\calu)}$ defined by
\[
L_\Sigma(g)=hg,\qquad\text{where $h=\Sigma_\beta(\alpha(g))$},
\]
and the local \emph{right translation} by $\Sigma$ is the map
$\map{R_\Sigma}{\beta^{-1}(\calu)}{\beta^{-1}(\calv)}$ defined by
\[
R_\Sigma(g)=gh,\qquad\text{where $h=\Sigma_\alpha(\beta(g))$}.
\]
%and the local \emph{inner automorphism} by $\Sigma$ as the map $\map{I_\Sigma}{\alpha^{-1}(\calv)\cap\beta^{-1}(\calu)}{\alpha^{-1}(\calu)\cap\beta^{-1}(\calv)}$ given by
%\[
%I_\Sigma(g)=\Sigma_\beta(\alpha(g))\cdot g\cdot \Sigma_\alpha(\beta(g)).
%\]
\end{definition}

Alternatively, the left action is $\Sigma\cdot g=hg$, where
$h\in\Sigma$ is the uniquely defined element such that
$\beta(h)=\alpha(g)$. Similarly the right action is $g \cdot
\Sigma=gh$, where $h\in\Sigma$ is the uniquely defined element
such that $\alpha(h)=\beta(g)$. Observe that both $L_\Sigma$ and $R_\Sigma$ are diffeomorphisms.
%Finally, $I_\Sigma(g)=hgk$, where $h\in\Sigma$ is the uniquely defined element such that $\beta(h)=\alpha(g)$ and $k\in\Sigma$ is the uniquely defined element such that $\beta(g)=\alpha(k)$.

It is easy to see that, for a bisection $\Sigma$, the left action $L_\Sigma$ preserves the $\beta$-fibers and maps $\alpha$-fibers to $\alpha$-fibers. Similarly, the right action $R_\Sigma$ preserves the $\alpha$-fibers and maps $\beta$-fibers to $\beta$-fibers.

The left action $L_\Sigma$ by a bisection $\Sigma$ extends the natural left action in the groupoid and we have $(\Sigma\cdot g)h=\Sigma\cdot(gh)$, or in other words $L_\Sigma\circ l_g=l_{\Sigma\cdot g}$. Moreover, since it preserves the $\alpha$-fibers, it maps left-invariant vector fields to left invariant vector fields
\begin{equation}\label{left-invariant-bisection}
TL_\Sigma(\lvec{X}(g))=\lvec{X}(L_\Sigma\, g)
\qquad\text{for every $X\in\Gamma(\tau)$}.
\end{equation}
Similarly, we have $(hg)\cdot\Sigma=h(g\cdot\Sigma)$ or $R_\Sigma\circ r_g=R_{g\cdot \Sigma}$, from where
\begin{equation}\label{right-invariant-bisection}
TR_\Sigma(\rvec{X}(g))=\rvec{X}(R_\Sigma\, g)
\qquad\text{for every $X\in\Gamma(\tau)$}.
\end{equation}

\subsection{General discrete Euler-Lagrange evolution operators}
\label{GdE-Leo}
 Let $L_{d}: G \to {\Real}$ be a discrete Lagrangian
function and consider a solution $(g_{0}, h_{0}) \in G_{2}$ of the
discrete Euler-Lagrange equations, that is,
\[
\lvec{X}(g_{0})(L_{d}) - \rvec{X}(h_{0})(L_{d}) = 0, \; \; \mbox{
for all } X \in \Gamma(\tau).
\]
We denote by $x_0\in M$ the point
\[
x_0=\beta(g_{0}) = \alpha(h_{0}) .
\]
Then, we consider the following objects:
\begin{itemize}
\item
A symmetric neighborhood ${\mathcal W}$ associated with $x_{0}$
and some open subset $U$ of $G$, with local coordinates $(x, u)$
as in Section~\ref{local-expressions}. We will denote by $V$ the
corresponding open subset of $M$ and by $(y)$ the local
coordinates on $V$.
\item
A local bisection $\Sigma_{0}$ of $G$ such that $g_{0} \in
\Sigma_{0}$.
\item
A local bisection $\Upsilon_{0}$ of $G$ such that $h_{0} \in
\Upsilon_{0}$.
\end{itemize}

We will assume, without the loss of generality, that the section
$(\Sigma_{0})_{\beta}$ (respectively, $(\Upsilon_{0})_{\alpha}$)
is defined on the open subset $V$ of $M$.

It is clear that $\calw_{\Sigma_0} =  L_{\Sigma_{0}}(\calw)$ is an open
neighborhood of $g_0$ in $G$ diffeomorphic to $\calw$. On $\calw_{\Sigma_{0}}$ we may consider a local coordinate system $(x, u)$ defined as follows: if $g\in\calw_{\Sigma_0}$ then there exists a unique $g_\calw\in\calw$ such that $L_{\Sigma_0}(g_\calw)=g$; the coordinates of the point $g$ are the coordinates in the symmetric neighborhood $\calw$ of he point $g_\calw$.

Similarly, $\calw_{\Upsilon_0} =  R_{\Upsilon_{0}}(\calw)$ is an open
neighborhood of $h_0$ in $G$ diffeomorphic to $\calw$. On $\calw_{\Upsilon_{0}}$ we consider local coordinates $(y, v)$ defined as follows: for $h\in\calw_{\Upsilon_0}$ we consider the unique point $h_\calw\in\calw$ such that $R_{\Upsilon_0}(h_\calw)=h$, and we assign to $h$ the coordinates $(y,v)$ of $h_\calw$ in the symmetric neighborhood $\calw$.

Moreover, using the same notation as in Section~\ref{local-expressions}, the pair of elements $(g, h) = (L_{\Sigma_{0}}(g_{\mathcal W}), R_{\Upsilon_{0}}(h_{\mathcal W})) \in {\mathcal W}_{\Sigma_{0}}
\times {\mathcal W}_{\Upsilon_{0}}$ is composable if and only if $y=\lbeta(x,u)$.

%
%So, it is clear that ${\mathcal W}_{\Sigma_{0}} =
%L_{\Sigma_{0}}({\mathcal W})$ (respectively, ${\mathcal
%W}_{\Upsilon_{0}}= R_{\Upsilon_{0}}({\mathcal W})$) is an open
%subset of $G$ diffeomorphic to ${\mathcal W}$ such that $g_{0} \in
%{\mathcal W}_{\Sigma_{0}}$ (respectively, $h_{0} \in {\mathcal
%W}_{\Upsilon_{0}}$). Therefore, on ${\mathcal W}_{\Sigma_{0}}$ and
%${\mathcal W}_{\Upsilon_{0}}$ one may consider local coordinates
%which we also denote by $(x, u)$. Moreover, using the same
%notation as in Section \ref{local-expressions}, we have that a composable
%pair $(g, h) = (L_{\Sigma_{0}}(g_{\mathcal W}),
%R_{\Upsilon_{0}}(h_{\mathcal W})) \in {\mathcal W}_{\Sigma_{0}}
%\times {\mathcal W}_{\Upsilon_{0}}$ is a solution of the discrete
%Euler-Lagrange equations for $L_d$ if the local coordinates of $g$
%and $h$ are

%and therefore $(g,h)$ is a solution of the discrete Euler-Lagrange equations for $L_d$ if the local coordinates of $g$ and $h$
%\[
%g\cong (x, u) \cong g_{\mathcal W}, \; \; \; h\cong (y, v) =
%(\lbeta(x, u), v) \cong h_{\mathcal W},
%\]
%satisfy
%\begin{equation}\label{E-L1} \lvec{e_{\gamma}}(g)(L_{d}) -
%\rvec{e_{\gamma}}(h)(L_{d}) = 0, \; \; \mbox{ for all } \gamma.
%\end{equation}
%Taking into account equations~\eqref{left-invariant-bisection} and~\eqref{right-invariant-bisection}, we have that equations~\eqref{E-L1} hold if and only if

To find the local equations satisfied by the coordinates of a solution $(g,h)$ of the discrete Euler-Lagrange equations for $L_d$
\begin{equation}\label{E-L1} \lvec{e_{\gamma}}(g)(L_{d}) -
\rvec{e_{\gamma}}(h)(L_{d}) = 0, \; \; \mbox{ for all } \gamma,
\end{equation}
we take into account equations~\eqref{left-invariant-bisection} and~\eqref{right-invariant-bisection}, from where we get that equations~\eqref{E-L1} hold if and only if
 \[
 (T_{g_{\mathcal W}}L_{\Sigma_{0}})(\lvec{e_{\gamma}}(g_{\mathcal
 W}))(L_{d}) - (T_{h_{\mathcal
 W}}R_{\Upsilon_{0}})(\rvec{e_{\gamma}}(h_{\mathcal W}))(L_{d}) = 0, \; \;
 \mbox{ for all } \gamma
 \]
 or, equivalently,
 \[
 \lvec{e_{\gamma}}(g_{\mathcal W})(L_{d} \circ L_{\Sigma_{0}}) -
 \rvec{e_{\gamma}}(h_{\mathcal W})(L_{d} \circ R_{\Upsilon_{0}}) = 0, \; \;
 \mbox{ for all } \gamma.
 \]
 In conclusion, using (\ref{left-alpha}) and (\ref{right-alpha}), we have proved
 \begin{theorem}\label{Local-DEL}
The pair $(g, h) = (L_{\Sigma_{0}}(g_{\mathcal W}),
R_{\Upsilon_{0}}(h_{\mathcal W})) \in {\mathcal W}_{\Sigma_{0}}
\times {\mathcal W}_{\Upsilon_{0}}$ is a solution of the discrete
Euler-Lagrange equations for $L_{d}$ if and only if the local
coordinates of $g$ and $h$
\[
g \cong (x, u) \cong g_{\mathcal W}, \; \; \; h \cong (y, v) =
(\lbeta(x, u), v) \cong h_{\mathcal W}
\]
satisfy the equations
\begin{multline}
L^\gamma_\mu(x,u)\pd{(L_d\circ
L_{\Sigma_{0}})}{u^\gamma}(x,u)
+\displaystyle\rho^i_\mu(y)\pd{(L_d\circ
R_{\Upsilon_{0}})}{x^i}(y,v) +\\
-R^\gamma_\mu(y,v)\pd{(L_d\circ R_{\Upsilon_{0}})}{u^\gamma}(y,v)=0.
\end{multline}
\end{theorem}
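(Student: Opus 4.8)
The plan is to transport the discrete Euler-Lagrange equations, which are naturally written at the points $g$ and $h$, back to the symmetric neighborhood $\calw$ where the local coordinate expressions for the invariant vector fields are available. The starting point is the intrinsic form $\lvec{e_\gamma}(g)(L_d)-\rvec{e_\gamma}(h)(L_d)=0$; since $g=L_{\Sigma_0}(g_\calw)$ and $h=R_{\Upsilon_0}(h_\calw)$ with $g_\calw,h_\calw\in\calw$, the whole task reduces to rewriting these two terms in terms of data living on $\calw$.

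First I would invoke that translation by a bisection carries invariant vector fields to invariant vector fields. By~\eqref{left-invariant-bisection} we have $\lvec{e_\gamma}(g)=(T_{g_\calw}L_{\Sigma_0})(\lvec{e_\gamma}(g_\calw))$, and by~\eqref{right-invariant-bisection} we have $\rvec{e_\gamma}(h)=(T_{h_\calw}R_{\Upsilon_0})(\rvec{e_\gamma}(h_\calw))$. Substituting these into the Euler-Lagrange equations and using the elementary pushforward identity $(T_pF)(v)(f)=v(f\circ F)$, the two terms reduce to $\lvec{e_\gamma}(g_\calw)(L_d\circ L_{\Sigma_0})$ and $\rvec{e_\gamma}(h_\calw)(L_d\circ R_{\Upsilon_0})$. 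This is the crucial simplification: the bisection has been absorbed into a modified Lagrangian ($L_d\circ L_{\Sigma_0}$ on the left, $L_d\circ R_{\Upsilon_0}$ on the right), while the invariant vector fields are now evaluated at points of $\calw$.

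Once the equations are expressed at $g_\calw\cong(x,u)$ and $h_\calw\cong(y,v)$, I would substitute the coordinate expressions~\eqref{left-alpha} and~\eqref{right-alpha} into the $\mu$-th equation. The left term yields $L^\gamma_\mu(x,u)\,\partial(L_d\circ L_{\Sigma_0})/\partial u^\gamma(x,u)$, with $\gamma$ summed; the right term, carrying the overall sign from $-\rvec{e_\mu}$ together with the $-\rho^i_\mu\,\partial/\partial x^i$ appearing in~\eqref{right-alpha}, produces $\rho^i_\mu(y)\,\partial(L_d\circ R_{\Upsilon_0})/\partial x^i(y,v)-R^\gamma_\mu(y,v)\,\partial(L_d\circ R_{\Upsilon_0})/\partial u^\gamma(y,v)$. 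Imposing the composability constraint $y=\lbeta(x,u)$ recorded in the statement then reproduces exactly the displayed equations.

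The only step with genuine content is the vector-field transfer; everything else is bookkeeping. The point I would be most careful about is that~\eqref{left-invariant-bisection} and~\eqref{right-invariant-bisection} genuinely apply here: $L_{\Sigma_0}$ and $R_{\Upsilon_0}$ are the \emph{bisection} translations, so that the former preserves $\alpha$-fibers and the latter preserves $\beta$-fibers, which is precisely what lets them intertwine the left- and right-invariant fields respectively. The sign and index conventions in~\eqref{right-alpha}, especially the $-\rho^i_\gamma\,\partial/\partial x^i$ contribution, must be tracked to ensure the final minus sign in front of $R^\gamma_\mu$ comes out correctly.
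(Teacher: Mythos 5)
Your argument is correct and coincides with the paper's own proof: both transfer the invariant vector fields from $g$ and $h$ back to $g_{\mathcal W}$ and $h_{\mathcal W}$ via~\eqref{left-invariant-bisection} and~\eqref{right-invariant-bisection}, absorb the bisection translations into the composed Lagrangians $L_d\circ L_{\Sigma_0}$ and $L_d\circ R_{\Upsilon_0}$, and then substitute the coordinate expressions~\eqref{left-alpha} and~\eqref{right-alpha}. Your sign and index bookkeeping for the $\mu$-th equation is also consistent with the displayed formula.
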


Note that the coordinates of $g_{0}$ and $h_{0}$ in ${\mathcal
W}_{\Sigma_{0}}$ and ${\mathcal W}_{\Upsilon_{0}}$, respectively,
are $(x_{0}, 0)$.

Next, as in Section \ref{Implicit-theorem}, we will consider the
matrix $(\mathbb{F}L_d)^{\gamma}_{\mu}(x, u)$, where
\begin{multline*}
(\mathbb{F}L_d)^{\gamma}_{\mu}(x, u) =
\rho^i_\mu(x)\frac{\partial^2(L_{d} \circ
R_{\Upsilon_{0}})}{\partial x^i\partial u^\gamma}(x,u)+\\
-\frac{\partial R^\nu_\mu}{\partial
u^\gamma}(x,u)\frac{\partial (L_d \circ R_{\Upsilon_{0}})}
{\partial u^\nu}(x,u) +\\
-R^\nu_\mu(x,u)\frac{\partial^2(L_d\circ
R_{\Upsilon_{0}})}{\partial u^\nu \partial u^\gamma}(x,u).
\end{multline*}

Then, we have the following result:
\begin{theorem}\label{DELE-operator1}
The following statements are equivalent.
\begin{itemize}
\item The matrix $(\mathbb{F}L_d)^{\gamma}_{\mu}(x_0,0)$ is regular.
\item The Poincare-Cartan 2-section $\Omega_{L_d}$ is non-degenerate at the point $h_0$.
\item The map $\mathbb{F}^-L_d$ is a local diffeomorphism at $h_0$.
\end{itemize}
Any of them implies the following: there exist open neighborhoods
$\widetilde{\mathcal W}_{\Sigma_{0}} \subseteq {\mathcal
W}_{\Sigma_{0}}$ and $\widetilde{\mathcal W}_{\Upsilon_{0}}
\subseteq {\mathcal W}_{\Upsilon_{0}}$ of $g_{0}$ and $h_{0}$ such
that if $g\in \widetilde{\mathcal W}_{\Sigma_{0}}$ then there is a
unique $\Psi(g) = h \in \widetilde{\mathcal W}_{\Upsilon_{0}}$
satisfying that the pair $(g, h)$ is a solution of the
Euler-Lagrange equations for $L_{d}$. In fact, the map $\Psi:
\widetilde{\mathcal W}_{\Sigma_{0}} \to \widetilde{\mathcal
W}_{\Upsilon_{0}}$ is a local discrete Euler-Lagrange evolution
operator.
\end{theorem}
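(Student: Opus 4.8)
The plan is to transport the computation of Section~\ref{Implicit-theorem} to the translated neighborhoods $\calw_{\Sigma_0}$ and $\calw_{\Upsilon_0}$ by means of the two bisections, using that $L_{\Sigma_0}$ carries left-invariant vector fields to left-invariant ones (equation~\eqref{left-invariant-bisection}) while $R_{\Upsilon_0}$ carries right-invariant ones to right-invariant ones (equation~\eqref{right-invariant-bisection}). The local form of the discrete Euler--Lagrange equations is already furnished by Theorem~\ref{Local-DEL}. First I would record that $g_0$ and $h_0$ correspond to the identity $\epsilon(x_0)$ inside $\calw$, so both carry coordinates $(x_0,0)$; in particular the $\calw_{\Upsilon_0}$-coordinates are adapted to $\alpha$ (because $R_{\Upsilon_0}$ preserves $\alpha$-fibres, whence $\alpha(h)=y$), and at $u=0$ the matrix $L^\gamma_\mu$ reduces to the identity by~\eqref{rho-L-R} and~\eqref{pd and pd2 of p}.

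For the equivalence of the first and third statements I would write $\mathbb{F}^-L_d$ in the coordinates of $\calw_{\Upsilon_0}$. Since, by Remark~\ref{r4.4'}, $\mathbb{F}^-L_d$ is built solely from right-invariant fields applied to $L_d$, relation~\eqref{right-invariant-bisection} gives $\rvec{e_\gamma}(h)(L_d)=\rvec{e_\gamma}(h_\calw)(L_d\circ R_{\Upsilon_0})$, so $\mathbb{F}^-L_d$ has exactly the standard expression of Section~\ref{Implicit-theorem} with $L_d$ replaced by $L_d\circ R_{\Upsilon_0}$; its first component is $y=\alpha(h)$ and its second is the standard $G_\gamma(y,v)$. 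The differential at $(x_0,0)$ is then block triangular, $\bigl[\begin{smallmatrix} I_n & 0\\ \ast & -\mathbb{F}L_d(x_0,0)\end{smallmatrix}\bigr]$, from which the equivalence of the regularity of $(\mathbb{F}L_d)^\gamma_\mu(x_0,0)$ and of $\mathbb{F}^-L_d$ being a local diffeomorphism at $h_0$ is immediate.

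For the second statement I would evaluate $\Omega_{L_d}$ at $h_0$ on the sections $e_\gamma^{(1,0)}$ and $\bar e_\mu^{(0,1)}$, where $\{e_\gamma\}$ is the coordinate basis near $\alpha(h_0)=x_0$ and $\{\bar e_\mu\}$ is any local basis of $\Gamma(\tau)$ near $\beta(h_0)$. By~\eqref{Omega1} and~\eqref{Omega2}, non-degeneracy of $\Omega_{L_d}(h_0)$ reduces to regularity of $\Omega_{\gamma\mu}(h_0)=\lvec{\bar e_\mu}(\rvec{e_\gamma}(L_d))(h_0)$. The inner function again has the standard coordinate form $G_\gamma(y,v)$ on $\calw_{\Upsilon_0}$, and the outer field $\lvec{\bar e_\mu}(h_0)$ is tangent to the $\alpha$-fibre, hence $\lvec{\bar e_\mu}(h_0)=\bar a^\theta_\mu\,\partial/\partial v^\theta|_{h_0}$ for an invertible matrix $\bar a^\theta_\mu$ (the two frames span the same $\alpha$-vertical space). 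A short computation then gives $\Omega_{\gamma\mu}(h_0)=-\bar a^\theta_\mu(\mathbb{F}L_d)^\theta_\gamma(x_0,0)$, which is regular precisely when $(\mathbb{F}L_d)^\gamma_\mu(x_0,0)$ is. Finally, for the evolution operator I would set $\lambda_\mu(x,u,v)$ equal to the left-hand side of the equation in Theorem~\ref{Local-DEL} after substituting $y=\lbeta(x,u)$; since the first summand does not depend on $v$, a direct differentiation yields $\partial\lambda_\mu/\partial v^\gamma(x_0,0,0)=(\mathbb{F}L_d)^\gamma_\mu(x_0,0)$, and the implicit function theorem then solves for $v$ as a function of $(x,u)$, producing the unique local operator $\Psi$, which is automatically a diffeomorphism onto its image.

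The hard part will be the second equivalence. Unlike in Section~\ref{Implicit-theorem}, the points $\alpha(h_0)=x_0$ and $\beta(h_0)$ need not sit in a common chart, and $R_{\Upsilon_0}$ brings only the right-invariant fields into standard form; the left-invariant fields entering $\Omega_{\gamma\mu}$ cannot be written in the $\calw_{\Upsilon_0}$-coordinates near $x_0$ at all, and must be taken from a basis near $\beta(h_0)$. The way around this is to note that only their values at the single point $h_0$ occur, and that there they constitute an arbitrary invertible $\alpha$-vertical frame $\bar a$; such an invertible factor leaves regularity untouched, so the precise form of the left-invariant fields is irrelevant. When $h_0\in\epsilon(M)$ this frame is the identity, recovering the sharper relation $\Omega_{\gamma\mu}=-(\mathbb{F}L_d)^\mu_\gamma$ quoted in the remark following Section~\ref{Implicit-theorem}.
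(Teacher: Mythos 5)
Your proposal is correct and follows essentially the same route as the paper: the implicit function theorem applied to the equations of Theorem~\ref{Local-DEL} for the evolution operator, the block-triangular differential of $\mathbb{F}^-L_d$ in the $\calw_{\Upsilon_0}$-coordinates for the third equivalence, and the observation that non-degeneracy of $\Omega_{L_d}(h_0)$ is tested on an $\alpha$-vertical frame at $h_0$ that differs from the transported coordinate frame only by an invertible factor. The paper realizes that last point by proving that $R_{\Upsilon_0}$ restricts to a diffeomorphism of the $\alpha$-fibre $\alpha^{-1}(x_0)$ near $\epsilon(x_0)$ onto a neighborhood of $h_0$, so that the pushed-forward frame $w^\gamma(h_0)$ is a basis of $V_{h_0}\alpha$; your invertible matrix $\bar a^\theta_\mu$ plays exactly the same role.
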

\begin{proof}
If the matrix $(\mathbb{F}L_d)^{\gamma}_{\mu}(x_{0},0)$ is regular
then, using Theorem~\ref{Local-DEL} and the implicit function
theorem, we deduce the result about the existence of the local
discrete Euler-Lagrange evolution operator $\Psi:
\widetilde{\mathcal W}_{\Sigma_{0}} \to \widetilde{\mathcal
W}_{\Upsilon_{0}}$.

On the other hand, the map $R_{\Upsilon_{0}}$ is a
diffeomorphism from an open neighborhood of $\epsilon(x_{0})$ in the $\alpha$-fiber
$\alpha^{-1}(x_{0})$ to an open neighborhood of $h_{0}$ in the $\alpha$-fiber
$\alpha^{-1}(x_{0})$. Indeed, (a) it is well defined: if $h\in\alpha^{-1}(x_0)$ then
\[
\alpha(R_{\Upsilon_0}(h))=\alpha(h\cdot \Upsilon_0)=\alpha(h)=x_0,
\]
so that $R_{\Upsilon_0}(h)\in\alpha^{-1}(x_0)$; (b) it is injective: it is the restriction of a local diffeomorphism to an $\alpha$-fiber; and (c) it is surjective: $\beta\circ (\Upsilon_0)_\alpha$ is a diffeomorphism, so that if $h'\in\alpha^{-1}(x_0)$ there exists $x\in M$ such that $(\beta\circ (\Upsilon_0)_\alpha)(x)=\beta(h')$, from where $h=h'[(\Upsilon_0)_\alpha(x)]^{-1}\in\alpha^{-1}(x_0)$ and hence $R_{\Upsilon_0}(h)=h'$.

Consequently, if
\[
w^{\gamma}(h_{0}) =
(T_{\epsilon(x_{0})}R_{\Upsilon_{0}})(\displaystyle
\frac{\partial}{\partial u^{\gamma}}_{|\epsilon(x_{0})}), \mbox{
for all } \gamma,
\]
then $\{w^{\gamma}(h_{0})\}$ is a basis of the vertical bundle to
$\alpha$ at the point $h_{0}$.

Moreover, if $\rvec{e_{\mu}}$ is the right-invariant vector field
on ${\mathcal W}_{\Upsilon_{0}}$ given by
\[
\rvec{e_{\mu}}(h) = (T_{h_{\mathcal
W}}R_{\Upsilon_{0}})(\rvec{e_{\mu}}(h_{\mathcal W})), \; \; \mbox{
for } h = R_{\Upsilon_{0}}(h_{\mathcal W}) \in {\mathcal
W}_{\Upsilon_{0}}
\]
then, using (\ref{right-alpha}), it follows that
\[
\rvec{e}_\mu(L_d)\circ R_{\Upsilon_0}
=-\rho^i_\mu\pd{(L_d\circ R_{\Upsilon_0})}{x^i}+R^\nu_\mu\pd{(L_d\circ R_{\Upsilon_0})}{u^\nu}
\]
which implies that the matrix $(w^{\gamma}(h_{0})(\rvec{e_{\mu}}(L_{d})))$ is, up to the sign,
$(\mathbb{F}L_d)^{\gamma}_{\mu}(x_{0}, 0)$.

 In addition, from~\eqref{DLt-} and~\eqref{right-invariant-bisection}, we
deduce that the local expression of the restriction of
$\mathbb{F}^-{L_d}$ to ${\mathcal W}_{\Upsilon_{0}}$ is
\[
(\mathbb{F}^-{L_d})(x, u) = (x,
-\rho^{i}_{\gamma}(x)\displaystyle \frac{\partial (L_{d} \circ
R_{\Upsilon_{0}})}{\partial x^i}(x, u) + R^{\mu}_{\gamma}(x,
u)\displaystyle \frac{\partial (L_{d} \circ
R_{\Upsilon_{0}})}{\partial u^{\mu}}(x, u)).
\]
These facts prove the result.
\end{proof}

Now, as in Section \ref{Implicit-theorem}, we consider local
coordinates $(\bar{x}, \bar{u})$ on $i({\calu}) = \bar{\calu}$
given by $(\bar{x}, \bar{u}) = (x, u) \circ i$. Note that
${\mathcal W} \subseteq \bar{\calu}$ and, thus, we have local
coordinates on ${\mathcal W}_{\Sigma_{0}}$ and ${\mathcal
W}_{\Upsilon_{0}}$ which we also denote by $(\bar{x}, \bar{u})$.

As above, using equations~\eqref{inverse-right-alpha} and~\eqref{inverse-left-alpha},  we may prove that a pair $(g, h) =
(L_{\Sigma_{0}}(g_{\mathcal W}), R_{\Upsilon_{0}}(h_{\mathcal W}))
\in {\mathcal W}_{\Sigma_{0}} \times {\mathcal W}_{\Upsilon_{0}}$
is a solution of the discrete Euler-Lagrange equations for $L_{d}$
if and only if the local coordinates of $g$ and $h$
\[
g \cong (\bar{y}, \bar{v}) = (\lbeta(\bar{x}, \bar{u}), \bar{v})
\cong g_{\mathcal W}, \; \; \; h \cong (\bar{x}, \bar{u}) \cong
h_{\mathcal W}
\]
satisfy the equations
%\begin{equation}\label{E-L2}
%\begin{array}{rcl}
%0 & =
%&\displaystyle-\rho^i_\mu(\lbeta(\bar{x},\bar{u}))\pd{(L_d\circ
%L_{\Sigma_{0}})}{\bar{x}^i}(\lbeta(\bar{x},\bar{u}),\bar{v}) \\[10pt]
%& + & \displaystyle
%R^\gamma_\mu(\lbeta(\bar{x},\bar{u}),\bar{v})\pd{(L_d\circ
%L_{\Sigma_{0}})}{\bar{u}^\gamma}(\lbeta(\bar{x}, \bar{u}),\bar{v})
%\\[10pt] & - & \displaystyle L^\gamma_\mu(\bar{x},\bar{u})\pd{(L_d\circ
%R_{\Upsilon_{0}})}{\bar{u}^\gamma}(\bar{x},\bar{u}).
%\end{array}
%\end{equation}
\begin{multline}
\label{E-L2}
0=-\rho^i_\mu(\lbeta(\bar{x},\bar{u}))\pd{(L_d\circ
L_{\Sigma_{0}})}{\bar{x}^i}(\lbeta(\bar{x},\bar{u}),\bar{v}) +\\
 +
R^\gamma_\mu(\lbeta(\bar{x},\bar{u}),\bar{v})\pd{(L_d\circ
L_{\Sigma_{0}})}{\bar{u}^\gamma}(\lbeta(\bar{x}, \bar{u}),\bar{v})+
\\
-  \displaystyle L^\gamma_\mu(\bar{x},\bar{u})\pd{(L_d\circ
R_{\Upsilon_{0}})}{\bar{u}^\gamma}(\bar{x},\bar{u}).
\end{multline}

In order to apply the implicit function theorem
(using~\eqref{E-L2}, we want to obtain $\bar{v}$ in terms of
$\bar{x}$ and $\bar{u}$ in a neighborhood of $(x_0,u_0,v_0)$), we
consider the matrix
$\overline{(\mathbb{F}L)}^{\gamma}_{\mu}(\bar{x}, \bar{u})$, where
\begin{multline}
\label{Implicit2}
\overline{(\mathbb{F}L)}^{\gamma}_{\mu}(\bar{x}, \bar{u})  =
\rho^i_\mu(\bar{x})\frac{\partial^2(L_{d} \circ
R_{\Upsilon_{0}})}{\partial \bar{x}^i\partial
\bar{u}^\gamma}(\bar{x},\bar{u})+\\
-\frac{\partial
R^\nu_\mu}{\partial \bar{u}^\gamma}(\bar{x},\bar{u})\frac{\partial
(L_d \circ R_{\Upsilon_{0}})}
{\partial \bar{u}^\nu}(\bar{x},\bar{u}) +\\
-L^\nu_\mu(\bar{x},\bar{u})\frac{\partial^2(L_d\circ
R_{\Upsilon_{0}})}{\partial \bar{u}^\nu \partial
\bar{u}^\gamma}(\bar{x},\bar{u}).
\end{multline}
%\begin{equation}\label{Implicit2}
%\begin{array}{rcl}
%\overline{(\mathbb{F}L)}^{\gamma}_{\mu}(\bar{x}, \bar{u}) & = &
%\displaystyle \rho^i_\mu(\bar{x})\frac{\partial^2(L_{d} \circ
%R_{\Upsilon_{0}})}{\partial \bar{x}^i\partial
%\bar{u}^\gamma}(\bar{x},\bar{u}) \\[10pt] & - & \displaystyle \frac{\partial
%R^\nu_\mu}{\partial \bar{u}^\gamma}(\bar{x},\bar{u})\frac{\partial
%(L_d \circ R_{\Upsilon_{0}})}
%{\partial \bar{u}^\nu}(\bar{x},\bar{u}) \\[10pt] & - & \displaystyle
%L^\nu_\mu(\bar{x},\bar{u})\frac{\partial^2(L_d\circ
%R_{\Upsilon_{0}})}{\partial \bar{u}^\nu \partial
%\bar{u}^\gamma}(\bar{x},\bar{u}).
%\end{array}
%\end{equation}
 Then, one may prove the following result.

\begin{theorem}
The following statements are equivalent.
\begin{itemize}
\item The matrices $(\mathbb{F}L_d)^{\gamma}_{\mu}(x_0,0)$ and
$\overline{(\mathbb{F}L)}^{\gamma}_{\mu}(x_0,0)$ are regular.
\item The Poincare-Cartan 2-section $\Omega_{L_d}$ is non-degenerate
at the points $g_0$ and $h_{0}$.
\item The maps $\mathbb{F}^+{L_d}$ and $\mathbb{F}^-{L_d}$
are local diffeomorphism at $g_{0}$ and~$h_0$.
\end{itemize}
Any of them implies the following: there exist open neighborhoods
$\widetilde{\mathcal W}_{\Sigma_{0}} \subseteq {\mathcal
W}_{\Sigma_{0}}$ and $\widetilde{\mathcal W}_{\Upsilon_{0}}
\subseteq {\mathcal W}_{\Upsilon_{0}}$ of $g_{0}$ and $h_{0}$ and
a unique (local) discrete Euler-Lagrange evolution operator $\Psi:
\widetilde{\mathcal W}_{\Sigma_{0}} \to \widetilde{\mathcal
W}_{\Upsilon_{0}}$ such that $\Psi(g_{0}) = h_{0}$. In addition,
$\Psi$ is a diffeomorphism.
\end{theorem}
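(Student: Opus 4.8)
The plan is to assemble the three-fold equivalence from two parallel chains of implications --- one anchored at the output point $h_0$ and one at the input point $g_0$ --- and then to read off the diffeomorphism property of $\Psi$ from a factorization through the discrete Legendre transformations. The chain anchored at $h_0$ is already in hand: by Theorem~\ref{DELE-operator1} the regularity of $(\mathbb{F}L_d)^\gamma_\mu(x_0,0)$ is equivalent both to the non-degeneracy of $\Omega_{L_d}$ at $h_0$ and to $\mathbb{F}^-L_d$ being a local diffeomorphism at $h_0$, and it furnishes, through the implicit function theorem applied to Theorem~\ref{Local-DEL}, the forward operator $\Psi\colon\widetilde{\calw}_{\Sigma_0}\to\widetilde{\calw}_{\Upsilon_0}$ with $\Psi(g_0)=h_0$.

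First I would establish the companion chain anchored at $g_0$, namely that the regularity of $\overline{(\mathbb{F}L)}^\gamma_\mu(x_0,0)$ is equivalent to the non-degeneracy of $\Omega_{L_d}$ at $g_0$ and to $\mathbb{F}^+L_d$ being a local diffeomorphism at $g_0$. I would prove this by transporting the three computations in the proof of Theorem~\ref{DELE-operator1} to the $\beta$-adapted coordinates $(\bar x,\bar u)=(x,u)\circ i$, using the inverted bases of invariant vector fields~\eqref{inverse-right-alpha} and~\eqref{inverse-left-alpha} in place of~\eqref{left-alpha} and~\eqref{right-alpha}, the backward equation~\eqref{E-L2} in place of Theorem~\ref{Local-DEL}, and the matrix~\eqref{Implicit2} in place of $(\mathbb{F}L_d)^\gamma_\mu$. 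By~\eqref{2.6'} the inversion interchanges the source and target fibrations and the left- and right-invariant vector fields, so that the argument transports essentially verbatim and $\overline{(\mathbb{F}L)}^\gamma_\mu(x_0,0)$ plays at $g_0$ exactly the role that $(\mathbb{F}L_d)^\gamma_\mu(x_0,0)$ plays at $h_0$. Concatenating the two chains yields the asserted equivalence of statements (1), (2) and (3).

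For the dynamical conclusion I would exploit that the discrete Euler-Lagrange equations are nothing but the matching of discrete momenta. Unwinding~\eqref{linv} and~\eqref{DLt+} gives $\lvec{X}(g)(L_d)=(\mathbb{F}^+L_d)(g)(X(\beta(g)))$, while unwinding~\eqref{rinv} and~\eqref{DLt-} gives $\rvec{X}(h)(L_d)=(\mathbb{F}^-L_d)(h)(X(\alpha(h)))$; since $X(\beta(g))$ ranges over all of $A_{x_0}G$, a composable pair $(g,h)$ solves~\eqref{Euler-Lagrange} if and only if
\[
(\mathbb{F}^+L_d)(g)=(\mathbb{F}^-L_d)(h)\quad\text{in }A^*_{x_0}G,\qquad x_0=\beta(g)=\alpha(h).
\]
In particular $(\mathbb{F}^+L_d)(g_0)=(\mathbb{F}^-L_d)(h_0)$ because $(g_0,h_0)$ is a solution. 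As $\mathbb{F}^-L_d$ is a local diffeomorphism at $h_0$, this identity can be solved locally for $h$, giving the factorization $\Psi=(\mathbb{F}^-L_d)^{-1}\circ\mathbb{F}^+L_d$ near $g_0$; and as $\mathbb{F}^+L_d$ is in addition a local diffeomorphism at $g_0$, the operator $\Psi$ is a composition of local diffeomorphisms. Shrinking to neighborhoods $\widetilde{\calw}_{\Sigma_0}$ and $\widetilde{\calw}_{\Upsilon_0}$ on which both Legendre maps are diffeomorphisms then makes $\Psi$ a diffeomorphism with $\Psi(g_0)=h_0$. Uniqueness follows as in Section~\ref{Implicit-theorem}: any two local evolution operators sending $g_0$ to $h_0$ both solve~\eqref{Euler-Lagrange}, whose solution $h$ for prescribed $g$ is unique by the implicit function theorem, so they agree on the overlap of their domains.

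The step I expect to be the main obstacle is the companion chain of the second paragraph. Although morally it is the image of Theorem~\ref{DELE-operator1} under the inversion $i$, one must verify honestly that $\overline{(\mathbb{F}L)}^\gamma_\mu(x_0,0)$ coincides, up to sign and an invertible factor, both with the matrix of $\Omega_{L_d}$ at $g_0$ evaluated on the basis $\{e_\gamma^{(1,0)},e_\mu^{(0,1)}\}$ and with the Jacobian of $\mathbb{F}^+L_d$ at $g_0$ in the $\beta$-adapted coordinates. This forces one to redo the derivative bookkeeping of the proof of Theorem~\ref{DELE-operator1} in the inverted coordinates, carefully tracking the sign changes coming from~\eqref{2.6'} and the interchange $L^\gamma_\mu\leftrightarrow R^\gamma_\mu$ between~\eqref{left-alpha} and~\eqref{inverse-right-alpha}. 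Once this is settled, the concatenation of the two chains and the factorization of $\Psi$ are purely formal.
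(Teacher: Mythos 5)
Your proposal is correct, and the first half of it (the two parallel equivalence chains, the one at $h_0$ quoted from Theorem~\ref{DELE-operator1} and the one at $g_0$ obtained by transporting that proof through the inversion to the $\beta$-adapted coordinates) is essentially what the paper does: the authors likewise dispose of the $g_0$ chain by invoking that $L_{\Sigma_0}$ restricts to a diffeomorphism between neighborhoods of $\epsilon(x_0)$ and $g_0$ in $\beta^{-1}(x_0)$ and then ``proceeding as in the proof of Theorem~\ref{DELE-operator1}'' with \eqref{DLt+}, \eqref{left-alpha} and \eqref{left-invariant-bisection}, i.e.\ exactly the bookkeeping you flag as the main obstacle. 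Where you genuinely diverge is the diffeomorphism property of $\Psi$. The paper stays inside the implicit-function-theorem framework: it applies the IFT a second time, to the backward system \eqref{E-L2} with the matrix \eqref{Implicit2}, to manufacture a smooth map $\Xi\colon\widetilde{\calw}_{\Upsilon_0}\to\widetilde{\calw}_{\Sigma_0}$ with $(\Xi(h),h)$ a solution for every $h$, and then uses the uniqueness clause of Theorem~\ref{DELE-operator1} to conclude $\Psi\circ\Xi=\id$ and $\Xi\circ\Psi=\id$. You instead rewrite the discrete Euler--Lagrange equations as the momentum-matching condition $(\F^+L_d)(g)=(\F^-L_d)(h)$ --- which is correct: by \eqref{linv} and \eqref{DLt+} one has $\lvec{X}(g)(L_d)=(\F^+L_d)(g)(X(\beta(g)))$, by \eqref{rinv} and \eqref{DLt-} one has $\rvec{X}(h)(L_d)=(\F^-L_d)(h)(X(\alpha(h)))$, and composability of $(g,h)$ is automatic from equality of the base points of the two covectors --- and then factor $\Psi=(\F^-L_d)^{-1}\circ\F^+L_d$ as a composition of local diffeomorphisms. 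Your route is the more conceptual one (it is the classical Marsden--West argument, and it makes uniqueness of $\Psi$ immediate from injectivity of $\F^-L_d$), at the price of introducing the intrinsic Legendre-transformation identity that the paper never states explicitly in this section; the paper's route is longer but uniform with the rest of Section~\ref{GdE-Leo}, reusing the same local coordinate systems and the same IFT machinery in both directions. Both are complete proofs.
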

\begin{proof}
Suppose that the matrices $(\mathbb{F}L)^{\gamma}_{\mu}(x_0,0)$
and $\overline{(\mathbb{F}L)}^{\gamma}_{\mu}(x_0,0)$ are regular.
Then, the existence of the local discrete Euler-Lagrange evolution
operator $\Psi: \widetilde{\mathcal W}_{\Sigma_{0}} \to
\widetilde{\mathcal W}_{\Upsilon_{0}}$ is guaranteed by Theorem
\ref{DELE-operator1}. Moreover, using (\ref{E-L2}),
(\ref{Implicit2}) and the implicit function theorem, we deduce
that there exist open neighborhoods of $g_{0}$ and $h_{0}$ which
we will assume, without the loss of generality, that are
$\widetilde{\mathcal W}_{\Sigma_{0}}$ and $\widetilde{\mathcal
W}_{\Upsilon_{0}}$ and, in addition, there exists a smooth map
\[
\Xi: \widetilde{\mathcal W}_{\Upsilon_{0}} \to \widetilde{\mathcal
W}_{\Sigma_{0}}
\]
such that, for each $h\in \widetilde{\mathcal W}_{\Upsilon_{0}}$,
the pair $(\Xi(h), h)$ is a solution of the discrete
Euler-Lagrange equations for $L_{d}$.

Thus, from Theorem \ref{DELE-operator1}, we obtain that
\begin{equation}
\label{Diff}
\Psi \circ \Xi = id, \; \; \;  \Xi \circ \Psi = id,
\end{equation}
which implies that $\Psi$ is a diffeomorphism. Indeed, if $g\in\tilde{\calw}_{\Sigma_0}$ then there is a unique element $h$ in $\tilde{\calw}_{\Upsilon_0}$, namely $h=\Psi(g)$, such that $(g,h)$ is a solution of the discrete Euler-Lagrange equations for $L_d$. This proves~\eqref{Diff} and thus $\Psi$ is a diffeomorphism.

On the other hand, the map $L_{\Sigma_{0}}$ is a diffeomorphism
from an open neighborhood of $\epsilon(x_{0})$ in
$\beta^{-1}(x_{0})$ on an open neighborhood of $g_{0}$ in
$\beta^{-1}(x_{0})$. Using this fact, \eqref{DLt+},
\eqref{left-alpha}, \eqref{left-invariant-bisection} and
proceeding as in the proof of Theorem \ref{DELE-operator1}, we
deduce the result.
\end{proof}

\subsection{Application}
Let $L_d: G\to \R$ be a discrete regular Lagrangian. 
Starting with a symmetric neighborhood ${\mathcal W}$ of the Lie groupoid $G$,  we have local coordinates expressions for the discrete Euler-Lagrange equations assuming that we can solve it on the symmetric neighborhood. If this is not the case, we are forced to use bisections. 

To illustrate this, assume that $G$ is a connected  Lie group and ${\mathcal W}$ is a symmetric neighborhood of the identity ${\mathfrak e}\in G$.
Assume that there exists a family of points of $G$: $\{g_i\}_{1\leq i\leq m}$ and $\{h_j\}_{1\leq j\leq n}$, with $g_1=h_1={\mathfrak e}$ such that $\displaystyle{G=\bigcup_{i=1}^m g_i{\mathcal W}=\bigcup_{j=1}^n {\mathcal W}h_j}$ (for instance, if $G$ is compact this property is verified).

Given an initial point $g$, then there exists at least an integer  $I$, $1\leq I\leq m$ such that $g\in g_{I}{\mathcal W}$ ($g_I\in G$ is the bisection using our notation). 
Now, we try to find an integer $J$, $1\leq J\leq n$ such that there exits a solution $h\in {\mathcal W}h_J$ of the following equation defined on the symmetric neighborhood ${\mathcal W}$: 
\[
 (T_{
g_{I}^{-1}g}
L_{g_{I}})(\lvec{e_{\gamma}}(g_{I}^{-1}g))(L_{d}) - (T_{
hh_{J}^{-1}}R_{h_{J}})(\rvec{e_{\gamma}}(hh_{J}^{-1}))(L_{d}) = 0, \; \;
 \mbox{ for all } \gamma
 \]
 where $\{e_{\gamma}\}$ is a basis of the Lie algebra ${\mathfrak g}$. 
If we find this $J$, $1\leq J\leq n$,   we will say that the pair $(g,h)\in G_2$ is a solution of the discrete Euler-Lagrange equations for $L_d$. 

We will explore in a future paper these techniques working with points not included in symmetric neighborhoods or points not included in the neighborhoods where the retraction maps are local diffeomorphisms.

\section{Conclusions and future work}
In this paper we have studied the local description of discrete Mechanics. In Section \ref{section:symmetric},  we have found  a local description of the discrete Euler-Lagrange equations using  the notion of  symmetric neighborhood on  Lie groupoids. In Section~\ref{sec:EL-general}, we extend this construction for points outside of this type of neighborhoods using bisections. 
We expect that our results apply to a wide range of  numerical methods using  discrete variational calculus. 

On the other hand, this paper will also open the possibility to easily adapt our construction to other families of
geometric integrators derived from discrete Mechanics, as for instance,  forced or dissipative systems, holonomic constraints,
explicitly time-dependent systems \cite{mawest}, frictional contact \cite{PaMa}
nonholonomic constraints \cite{IMMM}, multisymplectic field theories \cite{MaPa}, discrete optimal control 
\cite{JiKoMa,Leok,ObOlMa}.

\end{document}